\documentclass[12pt]{amsart}
\usepackage{amsmath,amssymb,amscd,graphicx, color,tensor,hyperref}
\voffset=0.0truein \hoffset=-0.75truein
\setlength{\textwidth}{6.5in} \setlength{\textheight}{8.8in}
\setlength{\topmargin}{-0.2in} \hfuzz5pt

\newcommand{\R}{{ \mathbb{R}  }}

\newcommand{\bke}[1]{\left( #1 \right)}

\newcommand{\bket}[1]{\left\{ #1 \right\}}
\newcommand{\norm}[1]{\left\Vert #1 \right\Vert}
\newcommand{\abs}[1]{\left| #1 \right|}
\newcommand*{\red}{\textcolor{red}}

\newcommand*{\medcap}{\mathbin{\scalebox{1.5}{\ensuremath{\cap}}}\hspace{0.1mm}}%

\begin{document}
\bibliographystyle{plain}

\newtheorem{defn}{Definition}
\newtheorem{lemma}{Lemma}
\newtheorem{proposition}{Proposition}
\newtheorem{theorem}{Theorem}
\newtheorem{assumption}{Assumption}
\newtheorem{cor}{Corollary}
\newtheorem{remark}{Remark}
\numberwithin{equation}{section}

\newenvironment{pflem1}{{\par\noindent\bf
           Proof of Lemma~\ref{LEM1}. }}{\hfill\fbox{}\par\vspace{.2cm}}
\newenvironment{pfthm1}{{\par\noindent\bf
           Proof of Theorem~\ref{THM1}. }}{\hfill\fbox{}\par\vspace{.2cm}}
\newenvironment{pfthm2}{{\par\noindent\bf
           Proof of Theorem~\ref{THM2}. }}{\hfill\fbox{}\par\vspace{.2cm}}

\title[chemotaxis-consumption systems involving tensor-valued sensitivities]{Regular solutions of
chemotaxis-consumption systems involving tensor-valued sensitivities  and\\ Robin type boundary conditions} 
 \subjclass[2010]{35K55, 35Q92, 92C17}%
\keywords{ Chemotaxis-consumption system; regular solution; Robin-type boundary condition; tensor-valued sensitivity.}

\author{Jaewook Ahn}%
\address{Department of Mathematics, Dongguk University, Seoul, 04620, Republic of Korea}%
\email{jaewookahn@dgu.ac.kr}

\author{Kyungkeun Kang}%
\address{School of Mathematics $\&$ Computing (Mathematics),  Yonsei University, Seoul 03722, Republic of Korea}%
\email{kkang@yonsei.ac.kr}

\author{Jihoon Lee}%
\address{Department of Mathematics, Chung-Ang University, Seoul 06974, Republic of Korea}%
\email{jhleepde@cau.ac.kr}

 \begin{abstract}
This paper deals with a parabolic-elliptic chemotaxis-consumption system with tensor-valued  sensitivity $S(x,n,c)$
 under no-flux boundary conditions for $n$ and Robin-type boundary conditions for $c$. The global existence of bounded classical solutions is established in dimension two under    general assumptions on tensor-valued sensitivity $S$. One of main steps is to show that $\nabla c(\cdot,t)$   becomes   tiny in $L^{2}(B_{r}(x)\cap \Omega)$ for every $x\in\overline{\Omega}$ and $t$ 
  when $r$ is sufficiently small, which seems to be of independent interest.
  On the other hand, in the case of scalar-valued sensitivity $S=\chi(x,n,c)\mathbb{I}$, there exists a bounded classical solution globally in time for two and higher dimensions provided the domain is a ball with radius $R$ and all given data are radial. The result of the radial case covers scalar-valued sensitivity $\chi$ that can be singular at $c=0$.
  \end{abstract}
  \maketitle
 
\section{Introduction}\label{sec1}
Chemotaxis-consumption systems are usually studied with scalar-valued chemotactic sensitivities where the chemotactic bacteria partially orient their movement along a gradient of a signal substance which they consume. However, according to recent modeling approaches, we do not necessarily have to assume that the chemotactic sensitivity is a scalar value. It has been suggested, based on the experimental findings~\cite{DTM05, LDW06} (see also \cite{XBO11}), to use more general, tensor-valued and spatially inhomogeneous
chemotactic sensitivity.~\cite{OH02,X15,XO09}

Taking into account tensor-valued sensitivity, in this paper, we consider the parabolic-elliptic chemotaxis-consumption system 
\begin{equation}\label{MODEL0}
 \left\{
\begin{array}{ll}
n_t =\nabla \cdot(\nabla n-  nS(x,n,c)\red{\cdot}\nabla c),\qquad\qquad& x\in\Omega,\, t>0, \vspace{1.5mm}\\
 0=\Delta c-n c,\qquad\qquad& x\in\Omega,\, t>0,
\end{array}
\right. 
\end{equation}
in a bounded smooth domain $\Omega\subset \R^{d}$, $d\ge2$, where the sensitivity $S(x,n,c)$ 
attains values in $\R^{d\times d}$.
 Here, 
 the unknowns $n$ and $c$ denote the bacterial population density and the signal concentration, respectively.
The boundary conditions posed will be
\begin{equation}\label{MODEL0BC}
(\nabla n-nS(x,n,c)\red{\cdot}\nabla c) \cdot\nu=0,\qquad \nabla c\cdot \nu=\gamma-c,\quad\quad x\in\partial\Omega,\, t>0,
\end{equation}
where $\nu$ denotes the outward unit normal vector to $\partial\Omega$.
We emphasize that the boundary condition for $c$ is of Robin type.

Homogeneous Neumann boundary conditions for $c$ have been often used in mathematical studies regarding \eqref{MODEL0} and its variants.~\cite{CL16,LS15,W15,W18,W21} However, in the original version of  \eqref{MODEL0} by Tuval et al.\cite{TC05}, 
certain non-trivial boundary conditions for $c$ are proposed
to take into account the effect of oxygen $c$ at the drop-air interface.  
Motivated by experimental observations in Tuval et al.,~\cite{TC05}    it has been  suggested in \cite{B17} (see also \cite{BL19}) to use non-homogeneous   boundary conditions  of the form
 \begin{equation}\label{RCINT}
 \nabla c\cdot \nu=(\gamma-c)g\quad\mbox{on}\quad \partial\Omega. 
 \end{equation}  
As seen in  \eqref{MODEL0BC},
 we will impose  \eqref{RCINT} with $g\equiv1$  but  results in  Theorem~\ref{THM1} are valid for more general $g$ (see Remark~\ref{RMK12}).

We compare \eqref{MODEL0}--\eqref{MODEL0BC} to the chemotaxis-consumption system with homogeneous Neumann boundary conditions 
\begin{equation}\label{PPMODEL0}
n_t  =\nabla \cdot(\nabla n-    \chi(c) n \nabla c), \quad\qquad
 c_{t}  =\Delta c-n c,\qquad\qquad  x\in\Omega,\, t>0,
\end{equation}
\begin{equation}\label{PPMODEL0N}
\quad\qquad\qquad\qquad\nabla n\cdot \nu= \nabla c\cdot \nu=0,\quad\quad\qquad\qquad\qquad x\in\partial\Omega,\, t>0.
\end{equation}
We remark that the $c$ equation should be of parabolic type since 
an elliptic approximation of the $c$ equation in 
  \eqref{PPMODEL0}--\eqref{PPMODEL0N} leads to $c\equiv0$. 
It is known that solutions of \eqref{PPMODEL0}--\eqref{PPMODEL0N}
 satisfy the energy-like inequality (see, e.g. \cite{CKL13,DLM10,W12,W14,W17})  
\begin{equation}\label{ENERGYIN}
\frac{d}{dt}\bke{ \int_{\Omega} n\log n +\frac{1}{2}\int_{\Omega} \chi(c)\frac{|\nabla c|^{2}}{c} }+\int_{\Omega}\frac{|\nabla n|^{2}}{n}+\frac{1}{4}\int_{\Omega} \frac{c}{\chi(c)}|D^{2}\rho(c) |^{2}\le0,
\end{equation}
where $\rho(c)=\int_{1}^{c} \chi(s)/s\, ds$.
The  inequality \eqref{ENERGYIN} is typically deduced via a subtle cancellation caused by nonlinear structure 
of the system \eqref{PPMODEL0}--\eqref{PPMODEL0N}. In the case of the system  \eqref{MODEL0}--\eqref{MODEL0BC},  because of presence of tensor-valued sensitivity, it is not clear whether or not such energy like inequality can be derived due to loss of the cancellation effect. As a variant of   \eqref{ENERGYIN}, we refer to \cite{BT20} for a chemotaxis-consumption-fluid system with constant sensitivity and Robin boundary condition.

As far as we know, there have been relatively few results dealing with tensor-valued sensitivity or Robin type boundary conditions. In particular, the only result in presence of both tensor-valued sensitivities and
Robin type boundary conditions we are aware of is that
 bounded weak solutions to a 3D chemotaxis-Stokes system with nonlinear cell diffusion are known to exist globally in time.\cite{TX20}

In presence of constant sensitivities and
Robin type boundary conditions,    smooth solutions to \eqref{MODEL0} are known to exist globally in time for general data and any dimension~\cite{FLM21} (see also \cite{BT20,WX20}).  However,
in presence of tensor-valued sensitivities and Neumann boundary conditions, even when $d=2$,
 smooth solutions to the fully parabolic counterpart of \eqref{MODEL0} have been found to exist globally in time only under a smallness assumption on $c$~\cite{LS15} (see also \cite{CL16}), or under additional regularizing effects such as nonlinear diffusion enhancement at large densities.~\cite{C14,WC15,WL17,WW15}
Without such additional effects,   large   data  global existence results  are so far available only for certain generalized weak solutions when $d\ge1$ in \cite{W15} and when $d=2$ in \cite{W18}. Recently,  the eventual smoothness and stabilization of certain generalized solutions are also  
investigated when $d=2$ by Winkler.~\cite{W21}

 The main motivation of the present work is to prove that smooth solutions to the two-dimensional chemotaxis-consumption system \eqref{MODEL0}--\eqref{MODEL0BC}  exist globally in time for general tensor-valued sensitivity and arbitrary large initial data.
As we mentioned earlier, it is unclear whether or not the energy-like inequality \eqref{ENERGYIN} can be derived in the case of the system \eqref{MODEL0}--\eqref{MODEL0BC}.
Instead, we   derive a series of spatially localized estimates (see Proposition~\ref{LEMGRADC}, Lemma~\ref{LEMLOCINT}, Lemma~\ref{LEMLOGL}), which will lead to the uniform in time bound of $\int_{\Omega}n  \log n $ (see Corollary~\ref{COR1}). Especially in Proposition~\ref{LEMGRADC}, which may be of independent interest, it is  shown that for arbitrary small~$\varepsilon>0$, we can find $r>0$,  independent of $x\in\overline{\Omega}$ and $t<T_{\rm max}$, such that  
 \[
\|\nabla c(\cdot,t)\|_{L^{2}(\Omega \cap B_{r}(x))}\le \varepsilon.
\] 
We  also  consider  
\eqref{MODEL0}--\eqref{MODEL0BC} with scalar-valued $S\equiv \chi \mathbb{I}_{d}$ for higher dimensions under the assumption of radial symmetry.
It will turn out that all solutions emanating from bounded radial initial data remain globally bounded, even in the case $\chi$ becomes singular at $c=0$ (see Theorem~\ref{THM2}).

 Now,   to formulate our main results, let us specify the precise problem setting.  We use the notation $\R_{+}:=(0,\infty)$.
On the tensor-valued sensitivity  $S=(S_{ij})_{i,j\in\{1,\cdots,d\}}$, we will   impose the conditions     
\begin{align}\label{S2}
\begin{aligned}[b]
&S_{ij}\in \mathcal{C}^{2}(\overline{\Omega}\times\overline{\R_{+}}\times\overline{\R_{+}}   )\quad\mbox{for all}\quad  i,j\in\{1,\cdots,d\}\quad\mbox{and}\\
 &|S (x,r,s)|+|\partial_{r}S (x,r,s)|\le S_{0}(s)\quad\mbox{for}\quad(x,r,s)\in\overline{\Omega}\times\overline{\R_{+}} \times\overline{\R_{+}} 
 \\&\mbox{with some } S_{0}\in\mathcal{C}( \overline{\R_{+}} ).
\end{aligned}
\end{align}
The boundary data $\gamma$ and the initial condition $n(\cdot,0)=n_{0}$ are assumed to satisfy   \begin{equation}\label{S3}
\gamma\in \R_{+},\qquad0\le n_{0}\in L^{\infty}(\Omega).
\end{equation}

 Our first main result is the global existence of regular solutions to the system \eqref{MODEL0}--\eqref{MODEL0BC} in two dimensions.
 \begin{theorem}\label{THM1}
Let $\Omega\subset \R^{2}$ be a bounded smooth domain.
Then, \eqref{MODEL0}--\eqref{MODEL0BC} subject to \eqref{S2}--\eqref{S3} admits a   unique non-negative   solution $(n,c)$ satisfying
\begin{align}\label{THM1SOL}
\begin{aligned}
 & n\in  \mathop{\medcap}_{p\in[1,\infty)}\mathcal{C}([0,\infty);L^{p}(\Omega))\cap \mathcal{C}^{2,1}(\overline{\Omega}\times(0,\infty))\cap L^{\infty}(0,\infty;L^{\infty}(\Omega)),
\\
& c\in \mathcal{C}^{2,0}(\overline{\Omega}\times(0,\infty))\cap  L^{\infty}(0,\infty;W^{1,\infty}(\Omega)).
\end{aligned}
\end{align}
\end{theorem}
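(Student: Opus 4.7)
The plan is to prove Theorem~\ref{THM1} by combining standard local well-posedness with a uniform a~priori $L^{\infty}$-bound on $n$, thereby ruling out finite-time blow-up. A Banach fixed-point argument (with $c$ recovered pointwise in $t$ from the elliptic Robin problem) yields a unique non-negative classical solution on a maximal interval $[0,T_{\max})$ together with the standard extensibility criterion that $T_{\max}<\infty$ forces $\|n(\cdot,t)\|_{L^{\infty}(\Omega)}\to\infty$ as $t\uparrow T_{\max}$. The remainder of the argument is the a~priori estimate that forbids this.

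The baseline information is cheap: integrating the $n$-equation and using the no-flux condition gives mass conservation $\|n(\cdot,t)\|_{L^{1}}=\|n_{0}\|_{L^{1}}$, while the weak maximum principle applied to $\Delta c=nc$ with $\partial_{\nu}c=\gamma-c$ yields $0\le c\le \gamma$, because $n\ge 0$ makes $c$ subharmonic and a boundary maximum of $c$ forces $\gamma-c=\partial_{\nu}c\ge 0$. Plugged back into the elliptic equation, standard $W^{2,p}$ theory for the Robin problem then turns $\|n(\cdot,t)\|_{L^{1}}\le C$ into a uniform bound for $c(\cdot,t)$ in $W^{1,q}(\Omega)$ for some $q>2$.

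The crucial step, and the main obstacle, is to upgrade these baseline bounds to a uniform-in-time bound on the entropy $\int_{\Omega}n\log n$. Testing the $n$-equation against $1+\log n$ produces
\begin{equation*}
\frac{d}{dt}\int_{\Omega}n\log n+\int_{\Omega}\frac{|\nabla n|^{2}}{n}=\int_{\Omega}S(x,n,c)\nabla c\cdot\nabla n,
\end{equation*}
and Cauchy--Schwarz together with the pointwise bound on $S$ supplied by \eqref{S2} dominates the right-hand side by $\tfrac{1}{2}\int|\nabla n|^{2}/n+C\int n|\nabla c|^{2}$. Everything therefore reduces to controlling $\int n|\nabla c|^{2}$. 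Here I would invoke Proposition~\ref{LEMGRADC}: given a target $\varepsilon>0$, cover $\overline{\Omega}$ by finitely many balls $B_{r}(x_{i})$ of the uniform radius $r$ furnished by the proposition, so that $\|\nabla c(\cdot,t)\|_{L^{2}(B_{r}(x_{i})\cap\Omega)}\le \varepsilon$ for every $i$ and every $t<T_{\max}$. On each patch, combine H\"older's inequality with the two-dimensional Gagliardo--Nirenberg--Trudinger inequality to interpolate $n$ between $\int_{\Omega}n\log n$ and $\int_{\Omega}|\nabla n|^{2}/n$; the smallness $\varepsilon$ is precisely what is needed to absorb the remaining fraction of $\int|\nabla n|^{2}/n$ into the left-hand side and leave a Gronwall-type inequality for $\int n\log n$. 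This is the step in which two-dimensionality is essential and in which the absence of an analogue of the cancellation identity \eqref{ENERGYIN} for tensor-valued $S$ is compensated by the localization of $\nabla c$.

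Once $\sup_{t<T_{\max}}\int_{\Omega}n\log n<\infty$ is secured, I would close the proof by a routine Moser-type bootstrap: $n\in L^{p}$ feeds through the elliptic equation to $c\in W^{2,p}$, hence $c\in W^{1,\infty}$ as soon as $p>2$, after which a standard semigroup argument applied to the $n$-equation delivers a uniform bound on $\|n(\cdot,t)\|_{L^{\infty}}$ on $[0,T_{\max})$. This contradicts the blow-up alternative, so $T_{\max}=\infty$, and the full regularity \eqref{THM1SOL} follows from parabolic Schauder theory on each compact subinterval $[\tau,T]\subset(0,\infty)$, together with the continuity into $L^{p}(\Omega)$ at $t=0$ coming from the $L^{\infty}$-hypothesis on $n_{0}$.
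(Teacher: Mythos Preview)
Your overall architecture matches the paper's: local existence with a blow-up alternative, a uniform entropy bound obtained by exploiting the localized smallness of $\nabla c$ from Proposition~\ref{LEMGRADC}, and a bootstrap to $L^\infty$. The paper localizes the entropy identity itself (testing against $\varphi^{3}\log n$ with a cutoff, Lemma~\ref{LEMLOGL}) and only afterwards sums over a finite cover (Corollary~\ref{COR1}), whereas you write the entropy identity globally and propose to localize only the bad term $\int_\Omega n|\nabla c|^2$. Either route stands or falls on the same technical point, and here your sketch has a genuine gap.

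The only quantity Proposition~\ref{LEMGRADC} makes small is $\|\nabla c\|_{L^{2}(B_{r}\cap\Omega)}$. Any H\"older splitting of $\int_{B_{r}\cap\Omega}n|\nabla c|^{2}$ that exhibits this factor leaves you with either $\|n\|_{L^{\infty}}$ or a higher norm $\|\nabla c\|_{L^{q}(B_{r})}$ with $q>2$, and neither is available from the baseline information (in particular your claim that $n\in L^{1}$ yields $c\in W^{1,q}$ for some $q>2$ is false in two dimensions: $\Delta c=nc\in L^{1}$ gives at best $\nabla c\in L^{2}$). The missing ingredient is an elliptic estimate that feeds the $c$-equation back in. This is the content of Lemma~\ref{LEMLOCINT} in the paper: applying $W^{2,3/2}\hookrightarrow W^{1,6}$ to $c\varphi$ one obtains
\[
\int_\Omega |\nabla c|^{4}\varphi^{3}\;\le\; C\,\|\nabla c\|_{L^{2}(\Omega\cap B_{\delta})}\int_\Omega \frac{|\nabla n|^{2}}{n}\varphi^{3}\;+\;C(\delta),
\]
and it is precisely this inequality that converts the $L^{2}$-smallness of $\nabla c$ into a small coefficient in front of the Fisher information. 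Your appeal to ``Gagliardo--Nirenberg--Trudinger to interpolate $n$'' treats only half of the product; the other half is this estimate on $\nabla c$, which your sketch omits. A secondary point: the passage from $\sup_{t}\int_\Omega n\log n<\infty$ to $n\in L^{p}$ for some $p>2$ is not quite ``routine Moser bootstrap'' either---in the paper it takes a separate step (Lemma~\ref{LEM7} combined with the explicit $L^{3}$ differential inequality in the proof of Theorem~\ref{THM1}) before $\nabla c\in L^{\infty}$ becomes available and the iteration can begin.
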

\begin{remark}
The   results in Theorem~\ref{THM1} can be extended to higher dimensions $d\ge3$ provided   $
S(x,n,c)\equiv \chi(c)\mathbb{I}_{d}$,   
$ \chi\in\mathcal{C}^{2}(\overline{\R_{+}})$, and $\chi,\chi'\ge0$. 
Indeed,  a priori $L^{p}$-estimate shows that
\begin{align*}
\begin{aligned}
\frac{1}{p}\frac{d}{dt} &\|n^{\frac{p}{2}}\|_{L^{2}( \Omega)}^{2}+\frac{4(p-1)}{p^{2}}\|\nabla n^{\frac{p}{2}}\|_{L^{2}( \Omega)}^{2}
\\&=\frac{(p-1)}{p}\int_{\partial\Omega}\chi(c)  n^{p}(\gamma- c)-\frac{(p-1)}{p}\int_{\Omega}n^{p}\chi'(c)|\nabla c|^{2}-\int_{\Omega}n^{p+1}\chi(c)c
\\&\le\frac{(p-1)}{p}\|\chi\|_{\mathcal{C}([0,\gamma])}\gamma\|n^{\frac{p}{2}}\|_{L^{2}(\partial\Omega)}^{2},\qquad p\ge1,
\end{aligned}
\end{align*}
where we used the non-negativities of $\chi$ and $\chi'$ in the last inequality.
If we further use the trace and interpolation  inequalities and    a Moser-type iteration argument, then we can obtain a uniform-in-time bound for $n$ (see \cite{FLM21} for the case    $\chi\equiv1$).
\end{remark}
\begin{remark}\label{RMK12}
We remark that
the results in Theorem~\ref{THM1} are still valid for more general Robin boundary conditions $\nabla c\cdot \nu =(\gamma-c)g$ with $0<g\in \mathcal{C}^{1+\theta}(\partial \Omega )$ for some $\theta\in(0,1)$. This can be verified by following the same methods of proof for Theorem~\ref{THM1} and thus,  for simplicity,
all computations are performed for the case $g\equiv1$.
\end{remark}
\begin{remark}
 In Theorem~\ref{THM1}, applying the classical parabolic regularity theory\cite{LSU88} to the no-flux boundary problem
$
n_t =\nabla\cdot(\nabla n- \vec{a})$ 
for $x\in\Omega,\, t>0$ with $\vec{a}=nS\cdot \nabla c\in L^{\infty}(0,\infty;L^{\infty}(\Omega))$,
we can further have H\"older continuity of $n$ up to $t=0$ provided that 
$n_{0}$ is H\"older continuous. 
 See, e.g., 
\cite[Thm.~1.3]{PV93}.
\end{remark}
Our second main result states that in the case of scalar-valued sensitivity, 
 \eqref{MODEL0}   has a global smooth solution in two and higher dimensions   
provided the domain is a ball and all given data are radial. 
\begin{theorem}\label{THM2}
Let $\Omega=B_{R}(0)\subset \R^{d}$, $d\ge2$. Assume that \eqref{S3} holds and $n_{0}$ is radial. 
Then, \eqref{MODEL0}--\eqref{MODEL0BC}
 with the scalar sensitivity  
$
S(x,n,c)\equiv \chi(x,n,c)\mathbb{I}_{d}$, $0\le \chi\in \mathcal{C}^{2}(\overline{\Omega}\times \overline{\R_{+}}\times  \R_{+} )$, 
admits a  unique  non-negative   solution $(n,c)$ satisfying \eqref{THM1SOL}   provided that for $x,y\in\overline{\Omega}$, $r\in \overline{\R_{+}}$, $s\in\R_{+} $
\begin{align*}
\begin{aligned}
&   \chi(x,r,s)=\chi(y,r,s)\quad\mbox{if}\quad |x|=|y|\quad\mbox{and}
\\ & \chi (x,r,s)  +|\partial_{r}\chi (x,r,s)|\le \chi_{0}(s)\quad\mbox{with some }\,\, \chi_{0}\in\mathcal{C}(\R_{+}).
\end{aligned}
\end{align*} 
\end{theorem}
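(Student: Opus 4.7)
The plan is to prove Theorem~\ref{THM2} by reducing to a one-dimensional problem via radial symmetry, exploiting the resulting explicit elliptic representation of $c$ to handle the possible singularity of $\chi$ at $c=0^+$, and closing an $L^{p}$-to-$L^{\infty}$ bootstrap for $n$. Standard parabolic fixed-point theory (viewing $c$ as determined from $n$ through elliptic Robin theory) first produces a unique classical solution $(n,c)$ on a maximal interval $[0,T_{\max})$ together with the usual extensibility criterion that $T_{\max}<\infty$ forces $\|n(\cdot,t)\|_{L^{\infty}(\Omega)}\to\infty$; rotational invariance combined with uniqueness preserves radial symmetry throughout $[0,T_{\max})$.

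Integrating the $c$-equation in radial form from $0$ to $r$ yields
\[
r^{d-1}\partial_{r}c(r,t)=\int_{0}^{r}s^{d-1}n(s,t)c(s,t)\,ds,
\]
from which I extract: (i) $c(\cdot,t)$ is radially non-decreasing, (ii) $0<c\le\gamma$ via the strong maximum principle and the Robin boundary condition, and (iii) with $m_{0}:=\|n_{0}\|_{L^{1}(\Omega)}$, the rough bound $|\nabla c|(r,t)\le\gamma m_{0}/(|S^{d-1}|r^{d-1})$. Combining (iii) with the near-origin refinement $|\nabla c|(r,t)\le Cr\|n(\cdot,t)\|_{L^{\infty}}$ (obtained by bounding the integrand directly) and optimizing the splitting radius produces
\[
\|\nabla c(\cdot,t)\|_{L^{\infty}(\Omega)}\le C\,\|n(\cdot,t)\|_{L^{\infty}(\Omega)}^{(d-1)/d}.
\]

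The core step is the uniform lower bound $c\ge c_{\star}>0$ required to tame $\chi_{0}(c)$. Radial monotonicity reduces this to bounding $c(0,t)$ from below. For the boundary value, inserting $\int_{\Omega}nc\le c(R,t)m_{0}$ into the integrated identity $c(R,t)|\partial\Omega|+\int_{\Omega}nc=\gamma|\partial\Omega|$ gives the clean uniform bound
\[
c(R,t)\ge\frac{\gamma|\partial\Omega|}{|\partial\Omega|+m_{0}}=:c_{R}^{\star}>0.
\]
For the interior, integrating $(\log c)_{r}\le U(r,t)/r^{d-1}$ with $U(r,t):=\int_{0}^{r}s^{d-1}n\,ds$ from $r$ to $R$, switching the order of integration, and applying H\"older's inequality, reduces the control of $c(0,t)$ to an $L^{q}$-bound on $n$ with $q>d/2$.

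Once $\chi$ is bounded by $K:=\chi_{0}(c_{\star})$, standard $L^{p}$-testing of the $n$-equation against $n^{p-1}$ (using the no-flux BC and discarding the favourable term $-(p-1)\int n^{p+1}\chi c$) combined with the gradient bound $\|\nabla c\|_{L^{\infty}}\le C\|n\|_{L^{\infty}}^{(d-1)/d}$ and trace/Gagliardo--Nirenberg inequalities on the ball leads to an autonomous differential inequality; a Moser-type iteration then yields a uniform-in-time $L^{\infty}$-bound for $n$, contradicting the extensibility criterion when $T_{\max}<\infty$. Hence $T_{\max}=\infty$, parabolic Schauder theory upgrades the regularity to~\eqref{THM1SOL}, and uniqueness is inherited from the local theory. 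The main obstacle I expect is the simultaneous bootstrap for $c_{\star}$ and the $L^{q}$-bound on $n$ ($q>d/2$), which relies crucially on the explicit radial formula for $\nabla c$ and mass conservation, and which must absorb the awkward $-(p-1)\int n^{p}\partial_{c}\chi\,|\nabla c|^{2}$ contribution (where $\partial_{c}\chi$ is not assumed bounded) using the favourable quadratic $n^{p+1}\chi c$ term together with the radial structure.
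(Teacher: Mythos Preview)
There is a genuine gap: the simultaneous bootstrap you flag as ``the main obstacle'' is in fact circular, and the ingredients you list do not close it. To obtain the lower bound $c\ge c_{\star}$ via $\int_{0}^{R}\rho^{1-d}U(\rho,t)\,d\rho$ you need $n\in L^{q}$ for some $q>d/2$; but every route you propose to that $L^{q}$ bound (the $L^{p}$ test, the gradient estimate $\|\nabla c\|_{L^{\infty}}\le C\|n\|_{L^{\infty}}^{(d-1)/d}$, the absorption of the $\partial_{c}\chi$ term) already presupposes a bound on $\chi_{0}(c)$, i.e.\ on $c_{\star}$. For the prototype $\chi=1/c$ the ``awkward'' term is $+(p-1)\int_{\Omega} n^{p}|\nabla c|^{2}/c^{2}$, and the only a priori control you have on $|\nabla\log c|$ is $(\log c)_{r}\le U(r,t)/r^{d-1}\le m_{0}/(\sigma_{d}r^{d-1})$, which is singular at the origin; the favourable term $-\int_{\Omega} n^{p+1}$ does not absorb this without further structure. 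A continuation argument (work while $c\ge c_{\star}/2$) also fails, because the constants in the $L^{q}$ estimate depend on $\chi_{0}(c_{\star}/2)$ and blow up as $c_{\star}\to 0$.

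The paper breaks the circle by a different, decisive idea: a comparison principle for the cumulative mass $Q(r,t)=\int_{B_{r}}n$. Since $Q_{r}\ge 0$, $c_{r}\ge 0$ and $\chi\ge 0$, the equation $Q_{t}=r^{d-1}(r^{1-d}Q_{r})_{r}-\chi\,c_{r}Q_{r}$ yields the differential inequality $Q_{t}\le r^{d-1}(r^{1-d}Q_{r})_{r}$, and comparison with the supersolution $M_{0}r^{d}$ gives $Q(r,t)\le M_{0}r^{d}$ uniformly in $t$, with $M_{0}$ depending only on $\|n_{0}\|_{L^{1}}$ and $\|n_{0}\|_{L^{\infty}}$. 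This single estimate, obtained \emph{without} any higher integrability of $n$ and without any bound on $\chi$, immediately delivers both $|\nabla c|\le \gamma M_{0}r/\sigma_{d}$ and $(\log c)_{r}\le M_{0}r/\sigma_{d}$; integrating the latter gives the uniform lower bound $c\ge c_{*}>0$. With $\nabla c\in L^{\infty}$ and $\chi\le\chi_{0}(c_{*})$ now in hand, the Moser iteration for $n$ is routine. Your scheme would work if you inserted this comparison step in place of the $L^{q}$ bootstrap.
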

\begin{remark}
The proof of Theorem~\ref{THM2} mainly relies on the decay estimate of the cumulative mass distribution $Q$ defined in \eqref{DEFQ} (see Lemma~\ref{lem2}). This is crucially used to obtain the upper bound of $|\nabla c|$ and the lower bound of $c$.
\end{remark}
\begin{remark}
We emphasize that unlike the case of Theorem~\ref{THM1}, the sensitivity $\chi(\cdot,\cdot,c)$ in Theorem~\ref{THM2}  may allow singularities at $c=0$, for example, $\chi(x,n,c)=1/c$. Although $\chi$ can be singular  at $c=0$, no singularity, however, occurs  since signal concentration $c$ is turned out to be bounded below, independent of time, away from zero (see Lemma~\ref{lem4}). 
\end{remark}
The outline is as follows: in Section~\ref{SEC2}, the local existence result is established; in Section~\ref{SEC3} and Section~\ref{SEC4}, we prove Theorem~\ref{THM1} and Theorem~\ref{THM2}, respectively.   Throughout this paper, the surface area of $B_{1}(0)$  is denoted by  $\sigma_{d}$.

\section{Local existence}\label{SEC2}
In this section, we prove a local existence   result via  
 the Banach fixed point theorem.
Our local existence result reads as follows. 
\begin{lemma}\label{LEM1}
Let $\Omega\subset\R^{d}$, $d\ge2$, be a bounded smooth domain. Then, there exists a maximal time of existence, $T_{\rm max}\in(0,\infty]$, such that for $t<T_{\rm max}$, a  unique solution $(n,c)$ of \eqref{MODEL0}--\eqref{MODEL0BC} subject to \eqref{S2}--\eqref{S3} exists and satisfies
\begin{align*}
\begin{aligned}
 & n\in  \mathop{\medcap}_{p\in[1,\infty)}\mathcal{C}([0,t);L^{p}(\Omega))\cap \mathcal{C}^{2,1}(\overline{\Omega}\times(0,t)) \cap L^{\infty}(0,t; L^{\infty}(\Omega)),
\\& c\in \mathcal{C}^{2,0}(\overline{\Omega}\times(0,t)),
\end{aligned}
\end{align*}
\[
\displaystyle\int_{\Omega}n(\cdot,t)=\int_{\Omega}n_{0},\qquad   n(x,t)\ge0,\qquad  0< c(x,t)< \gamma \qquad \mbox{for}\quad x\in  \Omega.
 \]
Moreover, it holds that
\begin{equation}\label{BUCRI}
\mbox{either}\quad T_{\rm max}=\infty\quad \mbox{or}\quad \limsup_{t\nearrow T_{\rm max}}\|n(\cdot,t)\|_{L^{\infty}(\Omega)}=\infty.
\end{equation}
\end{lemma}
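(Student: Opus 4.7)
The plan is to set up a Banach fixed point argument in a standard parabolic--elliptic framework. Fix some $p>d$ and a small time $T>0$ to be chosen later, and for $M>\|n_{0}\|_{L^{\infty}(\Omega)}+1$ consider the closed convex set
\[
X_{T,M}:=\bket{\tilde n\in \mathcal{C}([0,T];L^{p}(\Omega))\,:\, 0\le \tilde n\le M,\quad \tilde n(\cdot,0)=n_{0}}.
\]
Given $\tilde n\in X_{T,M}$, I would first solve the elliptic problem $0=\Delta c-\tilde n c$ with Robin data $\nabla c\cdot\nu=\gamma-c$ pointwise in $t$. Since $\tilde n\ge 0$, standard elliptic theory together with the maximum principle (the constant function $\gamma$ is a super-solution and $0$ is a sub-solution) yields a unique $c[\tilde n]\in L^{\infty}(0,T;W^{2,q}(\Omega))$ with $0<c<\gamma$ for any $q\in(1,\infty)$, depending continuously on $\tilde n$. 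Then, with this $c$ frozen, I would solve the quasilinear parabolic problem
\[
n_{t}=\nabla\cdot(\nabla n-nS(x,n,c)\nabla c),\qquad (\nabla n-nS\nabla c)\cdot\nu=0,\qquad n(\cdot,0)=n_{0},
\]
using maximal $L^{q}$-regularity and Amann's theory for quasilinear parabolic equations under conormal boundary conditions; this yields a unique $n\in\mathcal{C}([0,T];L^{p}(\Omega))\cap \mathcal{C}^{2,1}(\overline{\Omega}\times(0,T])$ satisfying $n\ge0$ by the parabolic maximum principle (the drift $nS\nabla c$ vanishes at $n=0$). This defines a map $\Phi:\tilde n\mapsto n$ on $X_{T,M}$.

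Next, I would verify that $\Phi$ leaves $X_{T,M}$ invariant and is a strict contraction in $\mathcal{C}([0,T];L^{p}(\Omega))$ for $T$ sufficiently small, depending only on $\|n_{0}\|_{L^{\infty}}$, $\gamma$, $\Omega$, and the modulus of continuity of $S$ on the compact set $\overline{\Omega}\times[0,M]\times[0,\gamma]$. The invariance $0\le n\le M$ on a short interval follows from a bootstrap using $L^{\infty}$-bounds for parabolic equations with $L^{\infty}$ drift, exploiting the fact that the drift term $S(x,n,c)\nabla c$ is bounded uniformly by \eqref{S2} and by the $W^{2,q}$-bound on $c$ combined with Sobolev embedding $W^{2,q}\hookrightarrow W^{1,\infty}$. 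For the contraction, given $\tilde n_{1},\tilde n_{2}\in X_{T,M}$, the difference $c_{1}-c_{2}$ satisfies an elliptic equation whose $W^{2,q}$-norm is controlled by $\|(\tilde n_{1}-\tilde n_{2})c_{2}\|_{L^{q}}$, while the difference of the $n$-equations, via energy estimates and Lipschitz dependence of $S$ on its arguments, produces a bound of the form $\|\Phi(\tilde n_{1})-\Phi(\tilde n_{2})\|_{X_{T}}\le C(M)T^{\alpha}\|\tilde n_{1}-\tilde n_{2}\|_{X_{T}}$ for some $\alpha>0$. Banach's fixed point theorem then gives a unique solution on $[0,T]$.

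The mass identity $\int_{\Omega}n=\int_{\Omega}n_{0}$ is obtained by integrating the $n$-equation and using the no-flux boundary condition, while $0<c<\gamma$ persists in time by the elliptic maximum principle applied at every time slice (note that $c\equiv 0$ and $c\equiv\gamma$ are strict sub- and super-solutions when $n\not\equiv0$). Uniqueness up to the maximal time $T_{\max}$ follows by standard gluing of the short-time contraction argument, and the solution is classical on $\overline{\Omega}\times(0,T_{\max})$ by parabolic and elliptic Schauder theory once $n$ is bounded and H\"older continuous.

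Finally, for the blow-up criterion \eqref{BUCRI}, I would argue by contraposition: assume $T_{\max}<\infty$ and $\limsup_{t\nearrow T_{\max}}\|n(\cdot,t)\|_{L^{\infty}(\Omega)}<\infty$. Then the uniform $L^{\infty}$-bound on $n$ gives a uniform $W^{2,q}(\Omega)$-bound on $c$ via elliptic regularity, hence $\nabla c\in L^{\infty}(0,T_{\max};L^{\infty}(\Omega))$, which together with \eqref{S2} makes the drift $nS(x,n,c)\nabla c$ bounded in $L^{\infty}$. Parabolic Schauder theory then yields uniform $\mathcal{C}^{2+\theta,1+\theta/2}$-estimates up to $t=T_{\max}$, so the short-time existence argument above can be restarted from time $T_{\max}-\delta$ for appropriate $\delta$, contradicting maximality. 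The main technical obstacle is handling the quasilinear no-flux boundary condition for $n$ in the contraction step, since $S$ depends on $n$ itself; this is managed by treating $S(\cdot,\tilde n,\cdot)$ as data in the fixed point iteration so that each subproblem remains semilinear.
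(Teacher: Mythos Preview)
Your approach is essentially the same as the paper's: a Banach fixed-point argument where, given $\tilde n$, one first solves the Robin elliptic problem for $c$, then the (linearized) parabolic problem for $n$ with $S(x,\tilde n,c)$ frozen, and finally shows contraction for small $T$, followed by a regularity bootstrap and the standard extension argument for the blow-up criterion. The only notable differences are cosmetic: the paper works in the $L^{p}$-ball $\{\|\tilde n\|_{L^{\infty}(0,T;L^{p})}\le M\}$ with $p>d+2$ rather than in an $L^{\infty}$-ball, and it treats the frozen subproblem as genuinely linear from the outset (your write-up wavers between ``quasilinear with $S(x,n,c)$'' and the linearization via $\tilde n$ that you correctly settle on in the final paragraph).
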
 
 To obtain Lemma~\ref{LEM1}, we prepare
the following elementary lemma.
\begin{lemma}\label{LEM2}
Let $\Omega\subset\R^{d}$, $d\ge2$, be a bounded smooth domain, and let $p>d$. For any
$u,f\in L^{p}(\Omega)$ with $u\ge0$  and any constant  $\eta\ge0$,  the problem
\begin{equation}\label{AUXPB}
 \left\{
\begin{array}{ll}
 -\Delta v+uv=f,\quad  &x\in\Omega,\\
   \nabla v\cdot \nu+v= \eta,\quad  &x\in\partial\Omega
\end{array}
\right.
\end{equation}
admits a unique solution $v\in W^{2,p}(\Omega)\cap \mathcal{C}^{1}(\overline{\Omega})$ with the following properties:
\begin{itemize}
 \item[(i)] There exists $C=(d,\Omega,p)$ such that\[\|v\|_{W^{2,p}(\Omega)}\le C( \|uv\|_{L^{p}(\Omega)}+\|f\|_{L^{p}(\Omega)} +\eta(|\partial\Omega|^{\frac{1}{2}}+|\partial\Omega|^{\frac{1}{p}}) ).\]
\item[(ii)] If $\eta=0$, then there exists $C=(d,\Omega,p, \|u\|_{L^{p}(\Omega)})$ such that  
\[\|v\|_{W^{2,p}(\Omega)}\le C \|f\|_{L^{p}(\Omega)}.\]
\item[(iii)] If $f\equiv 0$, then $0\le v\le \eta$. 
\end{itemize}
\end{lemma}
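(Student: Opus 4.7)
The plan is to solve \eqref{AUXPB} weakly via the Lax--Milgram theorem, to derive the $W^{2,p}$ estimates of (i)--(ii) from classical Robin elliptic regularity combined with energy bounds, and to establish (iii) by testing the weak form with suitable truncations. The natural weak formulation is
\[
\int_\Omega \nabla v\cdot\nabla\varphi+\int_\Omega uv\varphi+\int_{\partial\Omega}v\varphi=\int_\Omega f\varphi+\eta\int_{\partial\Omega}\varphi,\qquad \varphi\in H^1(\Omega).
\]
Because $u\ge 0$, the associated bilinear form is bounded and coercive on $H^1(\Omega)$: the boundary integral $\int_{\partial\Omega}v^2$ kills the constant kernel, so that together with $\int_\Omega|\nabla v|^2$ it dominates $\|v\|_{H^1(\Omega)}^2$ via a Poincar\'e-type inequality. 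Lax--Milgram then produces a unique weak solution $v\in H^1(\Omega)$, and testing with $v$ itself followed by Young's inequality yields the preliminary energy bound
\[
\|v\|_{H^1(\Omega)}\le C\bke{\|f\|_{L^2(\Omega)}+\eta|\partial\Omega|^{1/2}},\qquad C=C(d,\Omega),
\]
in which the $|\partial\Omega|^{1/2}$ factor is just the $L^2(\partial\Omega)$-norm of the constant $\eta$.

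For (i), I would rewrite the equation as $-\Delta v=f-uv$ with $\nabla v\cdot\nu+v=\eta$ and invoke the standard $W^{2,p}$ regularity for the Robin problem (Agmon--Douglis--Nirenberg), which gives
\[
\|v\|_{W^{2,p}(\Omega)}\le C\bke{\|f\|_{L^p(\Omega)}+\|uv\|_{L^p(\Omega)}+\|\eta\|_{W^{1-1/p,p}(\partial\Omega)}+\|v\|_{L^p(\Omega)}}.
\]
Since $\eta$ is constant on $\partial\Omega$, $\|\eta\|_{W^{1-1/p,p}(\partial\Omega)}\le C\eta|\partial\Omega|^{1/p}$, while $\|v\|_{L^p(\Omega)}$ is absorbed through Sobolev embedding from the $H^1$ bound (with a finite bootstrap on the equation if $d\ge 3$ and $p>2d/(d-2)$), yielding the asserted dependence $\eta\bke{|\partial\Omega|^{1/2}+|\partial\Omega|^{1/p}}$. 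For (ii), with $\eta=0$ the energy estimate collapses to $\|v\|_{H^1(\Omega)}\le C\|f\|_{L^2(\Omega)}$ with $C$ independent of $u$, and to upgrade the constant to depend only on $\|u\|_{L^p(\Omega)}$ rather than on $\|uv\|_{L^p(\Omega)}$ I would interpolate
\[
\|v\|_{L^\infty(\Omega)}\le C\|v\|_{W^{2,p}(\Omega)}^{d/(2p)}\|v\|_{L^p(\Omega)}^{1-d/(2p)}
\]
via Gagliardo--Nirenberg (valid since $p>d$), combine with $\|uv\|_{L^p}\le\|u\|_{L^p}\|v\|_{L^\infty}$, and invoke Young's inequality to absorb a small multiple of $\|v\|_{W^{2,p}}$ into the left-hand side of the preceding estimate.

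For (iii) with $f\equiv 0$, I would test the weak form with $v_-:=\max(-v,0)\ge 0$ to obtain
\[
-\int_\Omega|\nabla v_-|^2-\int_\Omega u v_-^2-\int_{\partial\Omega}v_-^2=\eta\int_{\partial\Omega}v_-;
\]
the left side is non-positive while the right side is non-negative, so both vanish and $v\ge 0$. For the upper bound, $w:=v-\eta$ satisfies $-\Delta w+uw=-u\eta\le 0$ together with $\nabla w\cdot\nu+w=0$, and testing with $w_+:=\max(w,0)$ yields
\[
\int_\Omega|\nabla w_+|^2+\int_\Omega u w_+^2+\int_{\partial\Omega}w_+^2=-\int_\Omega u\eta w_+\le 0,
\]
forcing $w_+\equiv 0$ and hence $v\le\eta$. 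The principal obstacle I anticipate is the absorption step in (ii): since the desired constant must depend only on $\|u\|_{L^p}$, one cannot accept $\|uv\|_{L^p}$ as a black-box datum as in (i), and the interpolation-plus-Young maneuver above is what genuinely produces the claimed dependence.
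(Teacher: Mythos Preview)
Your plan is correct and largely parallels the paper: both rely on the energy bound $\|v\|_{H^1}\le C(\|f\|_{L^p}+\eta|\partial\Omega|^{1/2})$ obtained by testing with $v$, elliptic $W^{2,p}$ regularity for the Robin problem, an interpolation-plus-Young absorption of $\|uv\|_{L^p}$ for (ii), and truncation testing for (iii). The principal difference lies in existence: the paper approximates $u$ by bounded nonnegative $u_l$, solves each regularized problem directly in $W^{2,p}$ via Grisvard, establishes uniform $W^{2,p}$ bounds (already combining the interpolation $\|u_l v_l\|_{L^p}\le C\|u_l\|_{L^p}\|v_l\|_{L^2}^{\theta_1}\|v_l\|_{W^{2,p}}^{1-\theta_1}$ with the energy estimate), and passes to the limit by weak $W^{2,p}$ compactness and $\mathcal{C}^1$ convergence; you instead apply Lax--Milgram once to obtain a weak $H^1$ solution and then bootstrap regularity up to $W^{2,p}$. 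Your route is more direct for existence but requires that bootstrap to be spelled out; the paper's approximation sidesteps it at the price of a limit passage. One caution on (i): to keep the constant independent of $u$ when absorbing the lower-order term $\|v\|_{L^p}$, the clean device is the interpolation $\|v\|_{L^p}\le\varepsilon\|v\|_{W^{2,p}}+C_\varepsilon\|v\|_{L^2}$ rather than a bootstrap through the equation, since the latter---if run via $\|uv\|_{L^q}\le\|u\|_{L^p}\|v\|_{L^r}$---would import $\|u\|_{L^p}$ into the constant. The paper takes precisely this interpolation route (with $\theta_2$), and likewise in (ii) interpolates $\|v\|_{L^\infty}$ between $W^{2,p}$ and $L^2$ rather than $L^p$, so that only the directly controlled quantity $\|v\|_{L^2}\le C\|f\|_{L^p}$ appears and no bootstrap is needed at all.
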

 \begin{proof}
 To obtain the existence, we let $u$ be approximated in $L^{p}(\Omega)$ by a sequence of bounded non-negative functions $u_{l}$ and let $v_{l}\in W^{2,p}(\Omega)$ be a unique solution of the problem
\begin{equation}\label{EQVL}
 \left\{
\begin{array}{ll}
 -\Delta v_{l}+u_{l}v_{l}=f,\quad  &x\in\Omega,\\
   \nabla v_{l}\cdot \nu+v_{l}= \eta,\quad  &x\in\partial\Omega,
\end{array}
\right.
\end{equation}
which is uniquely solvable by \cite[Thm. 2.4.2.6]{G85}. Note that the elliptic regularity theory \cite[Thm. 2.3.3.6]{G85} gives that there exists $C>0$ satisfying
\[
 \|v_{l}\|_{W^{2,p}(\Omega)}\le C( \|u_{l}v_{l}\|_{L^{p}(\Omega)}+\|f\|_{L^{p}(\Omega)} +\|v_{l}\|_{W^{1-\frac{1}{p},p}(\partial\Omega)}+\eta|\partial\Omega|^{\frac{1}{p}}).
\]
We also note that    H\"older's and the Gagliardo–Nirenberg inequalities yield $C>0$ such that
\[
\|u_{l}v_{l}\|_{L^{p}(\Omega)}\le C \|u_{l} \|_{L^{p}(\Omega)} \|v_{l}\|_{L^{2}(\Omega)}^{\theta_{1}}\|v_{l}\|_{W^{2,p}(\Omega)}^{1-\theta_{1}},\qquad\theta_{1}=\frac{\frac{2}{d}-\frac{1}{p}}{\frac{2}{d}-\frac{1}{p}+\frac{1}{2}}\in(0,1),
\]
and by $W^{1,p}(\Omega) \hookrightarrow W^{1-\frac{1}{p},p}(\partial\Omega)$ and the  Gagliardo–Nirenberg inequality, there exists $C>0$ fulfilling
\[
\|v_{l}\|_{W^{1-\frac{1}{p},p}(\partial\Omega)} \le C\|v_{l}\|_{L^{2}(\Omega)}^{\theta_{2}}\|v_{l}\|_{W^{2,p}(\Omega)}^{1-\theta_{2}},\qquad\theta_{2}=\frac{\frac{1}{d}}{\frac{2}{d}-\frac{1}{p}+\frac{1}{2}}\in(0,1).
\]
Combining above estimates, after applying Young's inequality, we have that  with some $C>0$,
\begin{equation}\label{LEM2PF2}
 \|v_{l}\|_{W^{2,p}(\Omega)}\le C( \|u_{l}\|_{L^{p}(\Omega)}^{\frac{1}{\theta_{1}}} \|v_{l}\|_{L^{2}(\Omega)}+\|f\|_{L^{p}(\Omega)} + \|v_{l}\|_{L^{2}(\Omega)}+\eta|\partial\Omega|^{\frac{1}{p}}).
\end{equation}
Now, we multiply the $v_{l}$ equation by $v_{l}$, integrate over $\Omega$, and use  integration by parts, H\"older's inequality and $H^{1}(\Omega)\hookrightarrow L^{\frac{p}{p-1}}(\Omega)$ to find $C>0$ such that
\begin{align*}
\begin{aligned}
\int_{\Omega}|\nabla v_{l}|^{2}+\int_{\partial\Omega}|v_{l}|^{2}+\int_{\Omega}u_{l}v_{l}^{2}&=\int_{\Omega}fv_{l}+\int_{\partial\Omega}\eta v_{l}\\
&\le C(\|f\|_{L^{p}(\Omega)}\|v_{l}\|_{H^{1}(\Omega)})+\int_{\partial\Omega}\eta v_{l}.
\end{aligned}
\end{align*}
Then,   the Poincar\'e inequality with trace term (see e.g. \cite{BGT19})  and Young's inequality yield $C>0$ satisfying
\begin{equation}\label{LEM2PF1}
\|v_{l}\|_{H^{1}(\Omega)}\le C(\|f\|_{L^{p}(\Omega)}+\eta|\partial\Omega|^{\frac{1}{2}}).
\end{equation}
In view of 
\eqref{LEM2PF2}--\eqref{LEM2PF1},  $v_{l}$ is bounded in $W^{2,p}(\Omega)$.
The compactness of $W^{2,p}(\Omega) \hookrightarrow \mathcal{C}^{1}(\overline{\Omega})$ and the weak compactness of bounded sets in $W^{2,p}(\Omega)$ allow us to extract a subsequence  $v_{l_{j}}$ 
converging in $\mathcal{C}^{1}(\overline{\Omega})$ that converges weakly in $W^{2,p}(\Omega)$. If we take the limit in the problem for $v_{l_{j}}$, then its limit $v$ is    in $ W^{2,p}(\Omega)\cap \mathcal{C}^{1}(\overline{\Omega})$ and satisfies  \eqref{AUXPB}. This concludes the existence result. 

To obtain the uniqueness, we let $v$ and $\tilde{v}$ be two solutions. Since a simple integration by parts gives
\[
\int_{\Omega}|\nabla(v-\tilde{v}) |^{2}+\int_{\partial\Omega}|v-\tilde{v}|^{2}+\int_{\Omega}u|v-\tilde{v}|^{2}=0,
\]
we have $v\equiv \tilde{v}$.

 Repeating similar computations as above, we can find $C=C(d,\Omega,p)$ such that 
\[
\|v\|_{W^{2,p}(\Omega)}\le C \bigr{(} \|uv\|_{L^{p}(\Omega)}+\|f\|_{L^{p}(\Omega)}+\eta(|\partial\Omega|^{\frac{1}{2}} +|\partial\Omega|^{\frac{1}{p}}) \bigr{)},
\]
which yields (i).
 
  Repeating similar computations as  \eqref{LEM2PF2} and \eqref{LEM2PF1}, since $\eta=0$, there exists $C=C(d,\Omega,p)$ such that 
\[
 \|v\|_{W^{2,p}(\Omega)}\le C( \|u\|_{L^{p}(\Omega)}^{\frac{1}{\theta_{1}}}  +1)\|f\|_{L^{p}(\Omega)}.
 \]
This concludes (ii). 

 To obtain (iii), we multiply the $v$ equation by $v_{-}:=-\min\{0,v\}$, integrate over $\Omega$, and use integration by parts. Then,  we have
 \[
 \int_{\Omega}|\nabla v_{-}|^{2}+ \int_{\Omega}u |v_{-}|^{2}+\int_{\partial \Omega }|v_{-}|^{2}=-\eta\int_{\partial \Omega }v_{-}.
 \]
Since the right-hand-side is non-positive,  $v_{-}\equiv0$, namely, $v\ge0$. Using the same argument, we can   deduce $v\le \eta$. 
\end{proof}
We are now ready to prove Lemma~\ref{LEM1}.
\begin{pflem1}
We fix $p>d+2$ and let $M:=2\|n_{0}\|_{L^{p}(\Omega)}+1$. With a positive number $T<1$ to be specified below, we introduce the Banach space
\[
X_{T}:=\{f\in\mathcal{C}([0,T];L^{p}(\Omega))\,|\, \|f\|_{L^{\infty}(0,T;L^{p}(\Omega))}\le M,\,\,\,f\ge0 \,\mbox{ for }\,t\le T\}.
\]
For any given $\tilde{n}\in X_{T}$, we note from Lemma~\ref{LEM1} that   the problem 
\[
 \left\{
\begin{array}{ll}
0= \Delta c-\tilde{n}c,\quad  &x\in\Omega,\\
   \nabla c\cdot \nu= \gamma-c,\quad  &x\in\partial\Omega,
\end{array}
\right.
\]
admits a unique solution $c(\cdot,t)\in W^{2,p}(\Omega)\cap\mathcal{C}^{1}(\overline{\Omega})$ for $t\le T$ such that   $0\le c\le \gamma$. We also note, using  Lemma~\ref{LEM1}~(ii), $W^{2,p}(\Omega)\hookrightarrow \mathcal{C}^{1}(\overline{\Omega})$, and Lemma~\ref{LEM1}~(iv),  that there exists $C=C(d,\Omega,p)>0$ satisfying
\begin{equation}\label{LEM1PF1}
\|c\|_{L^{\infty}(0,T ;\mathcal{C}^{1}(\overline{\Omega}))}\le C\gamma \|\tilde{n}\|_{L^{\infty}(0,T;L^{p}(\Omega))} .
\end{equation} 
With such $c=c(\tilde{n})$, according to \cite[III.~Thm.~5.1]{LSU88}, the linear problem
\[
 \left\{
\begin{array}{ll}
n_t =\nabla \cdot(\nabla n-  nS(x,\tilde{n},c)\red{\cdot}\nabla c),\qquad\qquad& x\in\Omega,\, t>0, \vspace{1.5mm}\\
(\nabla n-nS(x,\tilde{n},c)\red{\cdot}\nabla c) \cdot\nu=0,\qquad\qquad& x\in\partial\Omega,\, t>0,\vspace{1.5mm}\\
n(x,0)=n_{0}(x), \qquad\qquad &x\in \Omega
\end{array}
\right. 
\]
has a unique weak solution $n\in \mathcal{C}([0,T];L^{2}(\Omega))\cap L^{2}(0,T;H^{1}(\Omega))$. 
If we use the weak formulation with the test function $n_{-}:=-\min\{0,n\}$, then after using \eqref{S2} and   Young's and H\"older's inequalities, we have
\begin{align*}
\begin{aligned}
\int_{\Omega}|n_{-}(\cdot,t)|^{2} &\le -2\int_{0}^{t}\int_{\Omega}|\nabla n_{-}|^{2}+2 \int_{0}^{t}\int_{\Omega}|n_{-}||S_{0}(c)||\nabla n_{-}||\nabla c|\\
&\le\frac{1}{2}\|S_{0}\|_{\mathcal{C}([0,\gamma])}^{2}\|\nabla c\|_{L^{\infty}(0,T;L^{\infty}(\Omega))}^{2}\int_{0}^{t}\int_{\Omega}|n_{-}|^{2} \quad\mbox{for}\quad t\le T 
\end{aligned}
\end{align*}
and   $n\ge0$ follows by Gr\"onwall's inequality.
Moreover,  from \cite[Thm.~VI.~6.40]{L96},  
\[
n\in L^{\infty}( 0,T ;L^{\infty}(\Omega)),
\]  and by   similar computations as above, we have 
\begin{align*}
\begin{aligned}
\int_{\Omega}&n^{p}(\cdot,t)
\\&\le \int_{\Omega}n_{0}^{p}+\frac{p(p-1)}{4} \|S_{0}\|_{\mathcal{C}([0,\gamma])}^{2}\|\nabla c\|_{L^{\infty}(0,T;L^{\infty}(\Omega))}^{2}\int_{0}^{t}\int_{\Omega}n^{p} \quad\mbox{for}\quad t\le T.
\end{aligned}
\end{align*}
Using
 Gr\"onwall's inequality and taking supremum over the time interval, it follows that
\[
\|n\|_{L^{\infty}(0,T;L^{p}(\Omega))}\le \|n_{0}\|_{L^{p}(\Omega)}\exp\bke{  \frac{(p-1)}{4}  \|S_{0}\|_{\mathcal{C}([0,\gamma])}^{2}\|\nabla c\|_{L^{\infty}(0,T;L^{\infty}(\Omega)}^{2} T }.
\]
Therefore, if we use  \eqref{LEM1PF1} and take a sufficiently small  $T$, then   the mapping $\Phi$ given by $\Phi(\tilde{n}):=n$ maps $X_{T}$ into itself.

Next, for given $\tilde{n}_{1},\tilde{n}_{2}\in X_{T}$, we denote $n_{i}=\Phi(\tilde{n}_{i})$, $c_{i}=c(\tilde{n}_{i})$ for $i=1,2$, and $\delta f=f_{1}-f_{2}$. Note that for any $t\le T$ and  $\xi\in L^{2}(0,T; W^{1,2}(\Omega))$ with $\xi_{t}\in L^{2}(0,T;L^{2}(\Omega))$, we have
\begin{align*}
\begin{aligned}
\int_{\Omega}&\delta n \xi(\cdot,t)-\int_{0}^{t}\int_{\Omega} \delta n \xi_{t} +\int_{0}^{t}\int_{\Omega}\nabla\delta n\cdot \nabla \xi
\\
&=\int_{0}^{t}\int_{\Omega}(  \delta n S(x,\tilde{n}_{1},c_{1}) \cdot \nabla c_{1}  +n_{2}  Z \cdot \nabla c_{1} +n_{2}S(x,\tilde{n}_{2},c_{2}) \cdot \nabla \delta c)\cdot \nabla \xi,
\end{aligned}
\end{align*}
where 
\[
Z= S(x,\tilde{n}_{1},c_{1})-S(x,\tilde{n}_{2},c_{1}) + S(x,\tilde{n}_{2},c_{1})-S(x,\tilde{n}_{2},c_{2}).
\]  
Note also that by the mean value theorem, \eqref{S2} and $  c\le \gamma$,  there exists $C>0$ satisfying
\[
|Z|\le C( |\delta \tilde{n}| + |\delta c|)\quad\mbox{ a.e. }\mbox{ in }\Omega\times(0,T).
\]
Along with this, if we use the above weak formulation with the test function $|\delta n|^{p-2}\delta n$, \eqref{S2}, \eqref{LEM1PF1},  and $ c\le \gamma$, then   with some $C_{1}=C_{1}(d,\Omega,p,M)>0$, we have
\begin{equation}
\begin{aligned}[b]
\frac{1}{p}\int_{\Omega}&|\delta n(\cdot,t)|^{p}+\frac{4(p-1)}{p^{2}}\int_{0}^{t}\int_{\Omega}|\nabla |\delta n|^{\frac{p}{2}}|^{2}\\
&\le C_{1}\biggr{(}\int_{0}^{t}\int_{\Omega}|\nabla |\delta n|^{\frac{p}{2}}||\delta n|^{\frac{p}{2}}+ \int_{0}^{t}\int_{\Omega}|\nabla |\delta n|^{\frac{p}{2}}||\delta n|^{\frac{p}{2}-1} n_{2} |\delta\tilde{n}| 
\\&\qquad\qquad\qquad\qquad\qquad + \int_{0}^{t}\int_{\Omega}|\nabla |\delta n|^{\frac{p}{2}}||\delta n|^{\frac{p}{2}-1}n_{2}(|\delta c|+|\nabla \delta c|)\biggr{)}.
\end{aligned}
\label{WATER}
\end{equation}
We   apply   Young's inequality to the first term on the right-hand-side above to find $C>0$ satisfying
\[
\int_{0}^{t}\int_{\Omega}|\nabla |\delta n|^{\frac{p}{2}}||\delta n|^{\frac{p}{2}} \le  \frac{p-1}{C_{1}p^{2}}\int_{0}^{t}\int_{\Omega}|\nabla |\delta n|^{\frac{p}{2}}|^{2}+C\int_{0}^{t}\int_{\Omega}  |\delta n|^{p}.
\]
Similarly, applying Young's inequality to the rightmost term, after using $W^{2,p}(\Omega)\hookrightarrow W^{1,\infty}(\Omega)$ and $n_{2}\in X_{T}$, we observe that with some  $C>0$, 
\begin{align*}
\begin{aligned}
\int_{0}^{t}&\int_{\Omega}\bke{|\nabla |\delta n|^{\frac{p}{2}}||\delta n|^{\frac{p}{2}-1}n_{2}(|\delta c|+|\nabla \delta c|)}
\\ &\le \frac{p-1}{C_{1}p^{2}}\int_{0}^{t}\int_{\Omega}|\nabla |\delta n|^{\frac{p}{2}}|^{2}+C\bke{ \int_{0}^{t}\int_{\Omega}|\delta n|^{p}+ \int_{0}^{t}\|\delta c\|_{W^{2,p}(\Omega)}^{p}}.
\end{aligned}
\end{align*}
It remains to estimate the second term on the right-hand-side of \eqref{WATER}. Note that, due to our choice of $p$, H\"older's and
the Gagliardo-Nirenberg inequalities yield $C>0$ satisfying
\begin{align*}
\begin{aligned}
&\int_{0}^{t}\int_{\Omega}|\nabla |\delta n|^{\frac{p}{2}}||\delta n|^{\frac{p}{2}-1} n_{2} |\delta\tilde{n}| 
\\&\!\le C \int_{0}^{t}\Bigr{(}    \|\nabla |\delta n|^{\frac{p}{2}}\|_{L^{2}(\Omega)}^{1+\frac{d}{p}} \|\delta n\|_{L^{p}(\Omega)}^{\frac{p-d-2}{2}}  +   \|\nabla |\delta n|^{\frac{p}{2}}\|_{L^{2}(\Omega)} \|\delta n\|_{L^{p}(\Omega)}^{\frac{p-2}{2}}     \Bigr{)} \|n_{2}\|_{L^{p}(\Omega)} \|\delta\tilde{n}\|_{L^{p}(\Omega)}.
\end{aligned}
\end{align*}
Thus, using Young's inequality and $n_{2}\in X_{T}$,   we can find $C>0$ fulfilling
\begin{align*}
\begin{aligned}
\int_{0}^{t}&\int_{\Omega}\bke{|\nabla |\delta n|^{\frac{p}{2}}||\delta n|^{\frac{p}{2}-1} n_{2} |\delta\tilde{n}| }
\\&  \le \frac{p-1}{C_{1}p^{2}}\int_{0}^{t}\int_{\Omega}|\nabla |\delta n|^{\frac{p}{2}}|^{2}+C\bke{ \int_{0}^{t}\int_{\Omega}|\delta n|^{p}+\int_{0}^{t}\|\delta\tilde{n}\|_{L^{p}(\Omega)}^{p}}.
\end{aligned}
\end{align*}
Combining the above computations, we have that with some $C>0$,
\[
\int_{\Omega}|\delta n(\cdot,t)|^{p}\le C\bke{\int_{0}^{t}\int_{\Omega}  |\delta n|^{p}+ \int_{0}^{t}(\|\delta c\|_{W^{2,p}(\Omega)}^{p}+\|\delta\tilde{n}\|_{L^{p}(\Omega)}^{p})}.
\]
Since applying Lemma~\ref{LEM2}~(ii) to the problem for $\delta c$,
\[
\left\{
\begin{array}{ll}
-\Delta \delta c+\tilde{n}_{1}\delta c =-c_{2}\delta\tilde{n},\quad  &x\in\Omega,\\
\nabla \delta c\cdot \nu+\delta c=0,\quad  &x\in\partial\Omega,
\end{array}
\right.
\]
and using $c_{2}\le \gamma$ yields $C>0$ such that
\[
\|\delta c\|_{L^{\infty}(0,T;W^{2,p}(\Omega))}\le C\gamma\|\delta\tilde{n}\|_{L^{\infty}(0,T;L^{p}(\Omega))},
\]
by Gr\"onwall's inequality, it follows  that with some $C=C(d,\Omega,p, M)>0$,
\[
\|\delta n\|_{L^{\infty}(0,T;L^{p}(\Omega))}\le CT^{\frac{1}{p}}\exp(CT)\|\delta\tilde{n}\|_{L^{\infty}(0,T;L^{p}(\Omega))}.
\]
Hence, for a sufficiently small choice of $T$, the mapping $\Phi$ becomes contraction on $X_{T}$, and by the Banach fixed point theorem, we have a unique fixed point $n=\Phi(n)$. 

Next, we consider more regularity properties of solutions. Since $n$ belongs to $\mathcal{C}([0,T];L^{2}(\Omega))\cap L^{2}(0,T;H^{1}(\Omega))\cap L^{\infty}( 0,T ;L^{\infty}(\Omega))$ and satisfies for every 
$[t_{1},t_{2}]\subset (0,T]$ and  $\xi\in W^{1,2}_{\rm loc}(0,T;L^{2}(\Omega))\cap L^{2}_{\rm loc}(0,T;W^{1,2}(\Omega))$,
\[
\int_{\Omega}n\xi(\cdot, t_{2})-\int_{t_{1}}^{t_{2}}\int_{\Omega}n\xi_{t}+\int_{t_{1}}^{t_{2}}\int_{\Omega}(\nabla n -   n S(x,n,c) \cdot \nabla c)\cdot \nabla \xi
=\int_{\Omega}n\xi(\cdot, t_{1}),
\] for any $\eta\in(0,T)$ we have $n\in\mathcal{C}^{\theta,\frac{\theta}{2}}(\overline{\Omega}\times[\eta,T])$ with some $\theta\in(0,1)$ by the classical parabolic regularity theory\cite{LSU88} (see, e.g., \cite[Thm. 1.3]{PV93}). Then, 
$c(\cdot,t)\in\mathcal{C}^{2,\theta}(\overline{\Omega})$  for $t\in[\eta,T]$ by the elliptic regularity theory [\cite[Cor. 4.41]{L13}, and moreover, since we have for $[s,t]\subset[\eta,T]$
\[
 \left\{
\begin{array}{ll}
-\Delta(c(t)-c(s))+n(t)(c(t)-c(s))=-c(s)(n(t)-n(s)),\quad  &x\in\Omega,\\
   \nabla (c(t)-c(s))\cdot \nu+(c(t)-c(s))= 0,\quad  &x\in\partial\Omega, 
\end{array}
\right.
\]
it follows by [\cite[Thm. 2.26]{L13} (see also \cite[Lem.~2.4]{FLM21}) that with some $C>0$,
\[
\|c(t)-c(s)\|_{\mathcal{C}^{2,\theta}(\overline{\Omega})}\le C\| n(t)-n(s)\|_{\mathcal{C}^{\theta}(\overline{\Omega})}\quad\mbox{for}\quad \eta\le s\le t\le T.
\]
This yields  H\"older regularity on the time variable, $c\in \mathcal{C}^{2+\theta,\frac{\theta}{2}}(\overline{\Omega}\times[\eta,T])$, and by the standard parabolic regularity theory,   $n\in\mathcal{C}^{2,1}(\overline{\Omega}\times[\eta,T])$. Since $\eta\in(0,T)$ is arbitrary, we have the desired regularity result. Note that the blow-up criteria \eqref{BUCRI} follows by the standard extension argument, the mass conservation property of $n$ is a consequence of integrating the $n$ equation, and $0<c<\gamma$ is the result of the elliptic maximum principle. 
\end{pflem1}
\begin{remark}
We remark that  Lemma~\ref{LEM1} provides local existence of the solutions in Theorem~\ref{THM1}. Moreover, Lemma~\ref{LEM1} can be also used to obtain  Theorem~\ref{THM2} since in radial case, a priori estimate shows that there exists $c_{*}>0$ such that $c\ge c_{*}$ independent of any regularization of $\chi$ keeping non-negative sign, local existence of the solutions in Theorem~\ref{THM2} is also available because singularity of $\chi$  at $c=0$ does not play any role. Since its verification is admissible,  the details are omitted.
\end{remark}

\section{Case of tensor sensitivity in two dimensions }\label{SEC3}
In this section, we prove   Theorem~\ref{THM1} via a series of spatially localized estimates.
To this end, we first establish  a  uniform-in-time smallness    of   spatially localized  $L^{2}$-norm of $\nabla c$ in the following proposition. We remark that $\nabla c$ has a uniform-in-time $L^{2}$-norm over $\Omega$
 from
\begin{align}\label{NABLACL2}
\begin{aligned} 
 \int_{\Omega}|\nabla c|^{2} &\le \int_{\Omega}|\nabla c|^{2}+\int_{\Omega}nc^{2}+\frac{1}{2}\int_{\partial\Omega}c^{2}
 \\&=\int_{\partial\Omega}\gamma c-\frac{1}{2}\int_{\partial\Omega}c^{2}
 \\&\le  \frac{1}{2} \gamma^{2}|\partial \Omega|.
\end{aligned}
\end{align}
This bound implies that $L^{2}$ norm of $\nabla c$ becomes very small in a small neighborhood of each point, but it may not be uniformly small in time. In the next proposition, we prove that it is the case, namely localized norm of $\nabla c$ can be uniformly small independent of time.

\begin{proposition}\label{LEMGRADC}
Let $\Omega\subset \R^{d}$, $d\ge2$, be a bounded smooth domain.
Let $(n,c)$ be a solution given by Lemma~\ref{LEM1}.
 For any given $\varepsilon>0$, there exists   $\delta_{\varepsilon}>0$ independent of  $q\in\overline{\Omega}$  such that
\[
\sup_{t<T_{\rm max}}\|\nabla c(\cdot,t)\|_{L^{2}(\Omega \cap B_{\delta}(q))}\le \varepsilon\quad\mbox{for}\quad  \delta\in(0,\delta_{\varepsilon}).
\]
\end{proposition}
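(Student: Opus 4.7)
The strategy is a Caccioppoli-type estimate: test the elliptic equation $\Delta c=nc$ against $\eta_\delta^{2}c$ for a carefully chosen radial cut-off $\eta_\delta$ centered at $q$, and then exploit the sign of $n$ together with the bound $0<c<\gamma$ from Lemma~\ref{LEM1} to discard or absorb unfavourable terms. The delicate ingredient is the choice of $\eta_\delta$: in two dimensions a plain cut-off at scale $\delta$ produces $\int|\nabla\eta|^{2}=O(1)$ and is useless for smallness. One must therefore exploit the logarithmic capacity of a point in $\R^{2}$ by interpolating between scales $\delta$ and $\sqrt\delta$, a device that simultaneously works in higher dimensions.

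Concretely, I would take $\eta_\delta\in W^{1,\infty}(\R^{d})$ radial about $q$, equal to $1$ on $B_\delta(q)$, vanishing outside $B_{\sqrt\delta}(q)$, and given on the annulus $\delta\le|x-q|\le\sqrt\delta$ by
\[
\eta_\delta(x)=\frac{\log(\sqrt\delta/|x-q|)}{\log(\sqrt\delta/\delta)}.
\]
A direct calculation gives $\int_{\R^{d}}|\nabla\eta_\delta|^{2}\le C/|\log\delta|$ when $d=2$ and $\int_{\R^{d}}|\nabla\eta_\delta|^{2}\le C\delta^{(d-2)/2}/|\log\delta|^{2}$ when $d\ge 3$, both tending to $0$ as $\delta\to 0$. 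Multiplying $\Delta c=nc$ by $\eta_\delta^{2}c$, integrating by parts, and invoking the Robin condition $\nabla c\cdot\nu=\gamma-c$ produce
\[
\int_{\Omega}\eta_\delta^{2}|\nabla c|^{2}+\int_{\Omega}\eta_\delta^{2}nc^{2}=-2\int_{\Omega}\eta_\delta c\,\nabla\eta_\delta\cdot\nabla c+\int_{\partial\Omega}\eta_\delta^{2}c(\gamma-c).
\]
Absorbing the cross term by Young's inequality, discarding the non-negative $\int\eta_\delta^{2}nc^{2}$, and using $c\le\gamma$ lead to
\[
\int_{\Omega}\eta_\delta^{2}|\nabla c|^{2}\le 4\gamma^{2}\int_{\Omega}|\nabla\eta_\delta|^{2}+2\gamma^{2}\int_{\partial\Omega}\eta_\delta^{2}.
\]

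Smoothness of $\partial\Omega$ supplies the uniform bound $|\partial\Omega\cap B_{\sqrt\delta}(q)|\le C_{\Omega}\delta^{(d-1)/2}$ for every $q\in\overline\Omega$, hence $\int_{\partial\Omega}\eta_\delta^{2}\le C\delta^{(d-1)/2}$. Combining this with the above estimate for $\int|\nabla\eta_\delta|^{2}$ and the trivial inclusion $\int_{\Omega\cap B_\delta(q)}|\nabla c|^{2}\le\int_{\Omega}\eta_\delta^{2}|\nabla c|^{2}$ shows that the desired $L^{2}$-smallness can be achieved by choosing $\delta_\varepsilon$ small, independently of $q\in\overline\Omega$ and of $t<T_{\max}$. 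The main obstacle is precisely the $d=2$ case and the need to recognize that the right test function involves the logarithmic capacity cut-off; beyond that, the argument is a routine Caccioppoli computation, made possible by the favorable sign of the coupling $nc^{2}$ and by the uniform $L^{\infty}$ bound $c\le\gamma$ delivered by the elliptic maximum principle.
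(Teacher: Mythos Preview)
Your argument is correct and follows essentially the same strategy as the paper's proof: test the elliptic equation by $c$ times the square of a cut-off whose $H^{1}$-seminorm tends to zero, discard the favourable term $\int nc^{2}\eta_\delta^{2}$, absorb the cross term by Young's inequality, and use $0<c<\gamma$ throughout. The paper differs only in two technical choices. First, instead of your bounded annular logarithmic cut-off between radii $\delta$ and $\sqrt\delta$, it uses the unbounded test function $\psi_\eta(x)=\ln(-\ln|x-q|)-\ln(-\ln\eta)$ supported in $B_\eta(q)$, and verifies directly that $\|\psi_\eta\|_{H^{1}(\R^{d})}\to 0$ as $\eta\to 0$. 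Second, for the boundary term the paper invokes the trace embedding $H^{1}(\Omega)\hookrightarrow L^{2}(\partial\Omega)$ to bound $\int_{\partial\Omega}\psi^{2}$ by $C\|\psi\|_{H^{1}}^{2}$, whereas you appeal to the uniform surface-measure bound $\mathcal{H}^{d-1}(\partial\Omega\cap B_{\sqrt\delta}(q))\le C_{\Omega}\delta^{(d-1)/2}$; both devices are legitimate and yield the same conclusion.
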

\begin{proof}
\ Let   $\eta\in(0,e^{-1})$ and  $B_{\eta}(0)=\{ x\in\R^{d}\,|\, |x|<\eta\}$.
We   introduce  the non-negative radial   function 
\[
\psi_{\eta}(x):=
\left\{  
\begin{array}{ll}
\ln(-\ln |x| )-\ln(-\ln\eta),\qquad &x\in B_{\eta}(0)\setminus\{0\},\vspace{1mm}
\\
0,  &{\rm otherwise},
\end{array}
\right.
\]
and recall that the surface area of $B_{1}(0)$ is denoted by $\sigma_{d}$.
Direct computations show that 
\begin{align*}
\begin{aligned}
\|\psi_{\eta}\|_{L^{2}(\R^{d})}^{2}&=\sigma_{d}\int_{0}^{\eta}|\ln(-\ln r)-\ln(-\ln\eta)|^{2}r^{d-1}\,dr 
\\
&\le \sigma_{d}\int_{0}^{\eta}|\ln(-\ln r)|^{2}r^{d-1}dr
\\& =\sigma_{d}\int_{\ln\frac{1}{\eta}}^{\infty}|\ln \rho|^{2}e^{-d\rho} d\rho
\end{aligned}
\end{align*}
and 
\begin{align*}
\begin{aligned}
\|\nabla \psi_{\eta}\|_{L^{2}(\R^{d})}^{2}&=\sigma_{d}\int_{0}^{\eta}\frac{1}{|r\ln r|^{2}}r^{d-1}dr \\&=\sigma_{d}\int_{\ln\frac{1}{\eta}}^{\infty}\frac{1}{\rho^{2}}e^{-(d-2)\rho}  d\rho.
\end{aligned}
\end{align*}
Since the right hand sides above are both finite, $\psi_{\eta}\in H^{1}(\R^{d})$.  Moreover, since $\ln\frac{1}{\eta}$ tends to $\infty$ as $\eta$ approach $0$, 
\begin{equation}\label{LEMPSIV}
\|\psi_{\eta}\|_{H^{1}(\R^{d})}\rightarrow 0\quad\mbox{ as }\quad\eta\rightarrow 0. 
\end{equation}
Fix $q\in\overline{\Omega}$, and we denote $
\psi(x)=\psi_{\eta}(x-q)$ and $B_{\eta}=B_{\eta}(q)$.
If we test the $c$ equation of \eqref{MODEL0} with $c\psi^{2}$ and integrate over $\Omega$, then  integration by parts gives, due to $\psi=0$ in $(B_{\eta})^{c}$, that
\[
\int_{\Omega\cap B_{\eta}}n c^{2}\psi^{2}+\int_{\Omega\cap B_{\eta}}|\nabla  c|^{2}\psi^{2}=\int_{\partial\Omega}\nabla c\cdot\nu c\psi^{2} -2\int_{\Omega\cap B_{\eta}} \nabla  c\cdot \nabla \psi c\psi.
\]
Using  Young's inequality and  $c\le \gamma$, we  compute the rightmost term as
\[
\biggr{|}-2\int_{\Omega\cap B_{\eta}} \nabla  c\cdot \nabla \psi c\psi\biggr{|}\le \frac{1}{2}\int_{\Omega\cap B_{\eta}}|\nabla  c|^{2}\psi^{2}+2\gamma^{2}\int_{\Omega\cap B_{\eta}} |\nabla  \psi|^{2}.
\]
Next, to control the boundary term, we consider two cases.
If $B_{\eta}\subset \Omega$, then   $\psi=0$ on  $\partial\Omega$ and thus, 
\[
\int_{\partial\Omega  }\nabla c\cdot\nu c\psi^{2}=0.
\]
Otherwise, if $B_{\eta}\not\subset \Omega$, then  since $\psi=0$ in $(B_{\eta})^{c}$, we have  
\[
\int_{\partial\Omega}\nabla c\cdot\nu c\psi^{2}=\int_{\partial\Omega\cap  B_{\eta} }\nabla c\cdot\nu c\psi^{2}. 
\]
Thus, using the boundary condition and  $c\le \gamma$, we can compute   
\begin{align*}
\begin{aligned}
\biggr{|}\int_{\partial\Omega\cap  B_{\eta}  }\nabla c\cdot\nu c\psi^{2}\biggr{|}
&=\biggr{|}\int_{\partial\Omega\cap  B_{\eta} }(\gamma-c) c\psi^{2}\biggr{|}
\\
&\le \gamma^{2}\int_{\partial\Omega\cap  B_{\eta} }\psi^{2} 
\\
&\le \gamma^{2}\int_{\partial\Omega} \psi^{2}.
\end{aligned}
\end{align*}
Combining the above estimates, after using  $H^{1}(\Omega)\hookrightarrow L^{2}(\partial\Omega)$, we have that with some $C>0$ independent of $\eta$,
\[
 \int_{\Omega\cap B_{\eta}}|\nabla  c|^{2}\psi^{2}\le C\|\psi\|_{H^{1}(\R^{d})}^{2}.
\]
In view of \eqref{LEMPSIV}, there exists sufficiently small $\eta_{0}>0$ such that the right-hand-side above is less than or equal to $\varepsilon^{2}$ for   $\eta<\eta_{0}$. Moreover, since 
there exists $\delta_{0}>0$ satisfying
\[
 \psi^{2}\ge  1  \quad\mbox{a.e.}\quad\mbox{in }  B_{\delta}\quad \mbox{for}\quad \delta\in(0,\delta_{0}),
\]
we can deduce the desired result.
\end{proof}
For further local-in-space estimates, we introduce a smooth  cut-off function and its properties (see, e.g. \cite{FS16}):
\begin{lemma}\label{LEMSTEST}
Let   $\delta>0$. There is a radially decreasing function $\varphi_{ \delta }\in C_{0}^{\infty}(\R^{d})$ satisfying
\[
\varphi_{\delta}(x)=
\left\{  
\begin{array}{ll}
1,\quad &x\in B_{\frac{\delta}{2}}(0),\vspace{1mm}
\\
0,  &x\in \R^{d}\setminus B_{ \delta }(0),
\end{array}
\right.
\]
\[
0\le \varphi_{\delta}\le 1\quad\mbox{in}\quad\R^{d},
\]
and
\[
|\nabla \varphi_{\delta}|\le K\varphi_{\delta}^{ \frac{1}{2}} \quad\mbox{in}\quad\R^{d},
\]
where $K$ is a  positive constant of order $\mathcal{O}(\delta^{-1})$.
\end{lemma}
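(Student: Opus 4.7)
The plan is to build $\varphi_\delta$ as the square of a standard smooth cut-off, which automatically produces the gradient bound $|\nabla\varphi_\delta|\le K\varphi_\delta^{1/2}$. This is the only non-routine property, so everything else follows from the classical mollified-indicator construction.

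First I would fix a radially decreasing profile $\chi\in C^\infty([0,\infty))$ with $\chi(r)=1$ for $r\le 1/2$, $\chi(r)=0$ for $r\ge 1$, and $0\le\chi\le 1$ throughout. Such a $\chi$ is standard (take the mollification of $\mathbf{1}_{[0,3/4]}$, or use the classical $\exp(-1/(1-r^2))$-type construction followed by a partition of unity argument). Set $\tilde\varphi_\delta(x):=\chi(|x|/\delta)$. Then $\tilde\varphi_\delta\in C^\infty_0(\R^d)$, it is radially decreasing, equals $1$ on $B_{\delta/2}(0)$, vanishes outside $B_\delta(0)$, takes values in $[0,1]$, and satisfies the chain-rule bound
\[
|\nabla\tilde\varphi_\delta(x)|=\tfrac{1}{\delta}|\chi'(|x|/\delta)|\le \tfrac{C_\chi}{\delta},
\]
where $C_\chi:=\|\chi'\|_{L^\infty}$.

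Next I define $\varphi_\delta:=\tilde\varphi_\delta^{\,2}$. Squaring preserves smoothness, support, the value $1$ on $B_{\delta/2}(0)$, the bound $0\le\varphi_\delta\le 1$, and the radial-decrease property (since $t\mapsto t^2$ is non-decreasing on $[0,1]$). This takes care of the first two displayed properties in the statement.

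Finally, the key computation: by the chain rule,
\[
\nabla\varphi_\delta=2\tilde\varphi_\delta\,\nabla\tilde\varphi_\delta,
\qquad
|\nabla\varphi_\delta|\le 2\tilde\varphi_\delta\cdot\tfrac{C_\chi}{\delta}=\tfrac{2C_\chi}{\delta}\,\varphi_\delta^{1/2},
\]
so the third property holds with $K:=2C_\chi/\delta=\mathcal{O}(\delta^{-1})$. No step here is an obstacle; the only thing to notice is that ordinary cut-offs only give $|\nabla\varphi|\le K$, and promoting this to $|\nabla\varphi|\le K\varphi^{1/2}$ requires precisely the squaring trick used above so that one factor of $\tilde\varphi_\delta=\varphi_\delta^{1/2}$ survives the differentiation.
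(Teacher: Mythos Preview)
Your construction is correct: squaring a standard smooth cut-off $\tilde\varphi_\delta$ is precisely the right device to produce the inequality $|\nabla\varphi_\delta|\le K\varphi_\delta^{1/2}$, and all the remaining properties are immediate. The paper does not actually prove this lemma but merely cites \cite{FS16}, so there is no detailed argument to compare against; your proof supplies what the paper omits, and the squaring trick you use is the standard route to this kind of cut-off.
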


We now prepare the following lemma which is used to prove Lemma~\ref{LEMLOGL}. For computational simplicity, we use $\varphi^{3}$ as a test function.
\begin{lemma}\label{LEMLOCINT}
Let $\Omega\subset \R^{2}$  be a bounded smooth domain. Let $(n,c)$ be a solution given by Lemma~\ref{LEM1}.
Assume that  $\delta>0$  and  $\varphi_{\delta}$  is the function introduced in   Lemma~\ref{LEMSTEST}. Denote $\varphi(x)=\varphi_{\delta}(x-q)$ and $B_{\delta}=B_{\delta}(q)$  for $q\in\overline{\Omega}$.
Then, there exist two positive constants $C_{2}$ and $C_{3}$ independent of $\delta$ and $q$  such that
\begin{equation}\label{LEMLOCINT1}
\int_{\Omega}n^{2}\varphi^{3}
 \le C_{2}\bke{  \int_{\Omega} \frac{|\nabla n|^{2}}{n}\varphi^{3}  +\| \varphi^{\frac{3}{2}}  \|_{W^{1,\infty}(\R^{2})}^{2}},
\end{equation}
\begin{align}\label{LEMLOCINT2}
\begin{aligned}[b]
 \int_{\Omega}&|\nabla c|^{4}\varphi^{3}
 \\&\le C_{3}\|\nabla c\|_{L^{2}(\Omega\cap B_{\delta})}
 \bke{ \int_{\Omega} \frac{|\nabla n|^{2}}{n}\varphi^{3}     +\| \varphi^{\frac{3}{2}}  \|_{W^{1,\infty}(\R^{2})}^{2}+ \| \varphi\|_{W^{2,\frac{3}{2}}(\R^{2})}^{3}}.
\end{aligned}
\end{align}
\end{lemma}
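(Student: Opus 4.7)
The plan is to view each left-hand side as a squared $L^{2}$-norm of an auxiliary function localized by $\varphi^{3/2}$ and to apply the two-dimensional Sobolev embedding $W^{1,1}(\Omega)\hookrightarrow L^{2}(\Omega)$, invoking mass conservation and the a priori estimate $\|\nabla c(\cdot,t)\|_{L^{2}(\Omega)}\le C$ from \eqref{NABLACL2} to control the resulting $W^{1,1}$-norms.

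For \eqref{LEMLOCINT1}, set $w:=n\varphi^{3/2}$, so that $\int_{\Omega}n^{2}\varphi^{3}=\|w\|_{L^{2}(\Omega)}^{2}\le C(\|w\|_{L^{1}(\Omega)}+\|\nabla w\|_{L^{1}(\Omega)})^{2}$. Mass conservation gives $\|w\|_{L^{1}(\Omega)}\le\|n_{0}\|_{L^{1}(\Omega)}$, while the decomposition $\nabla w=\varphi^{3/2}\nabla n+n\nabla\varphi^{3/2}$ combined with the Cauchy--Schwarz estimate $\int_{\Omega}\varphi^{3/2}|\nabla n|\le(\int_{\Omega}\frac{|\nabla n|^{2}}{n}\varphi^{3})^{1/2}\|n_{0}\|_{L^{1}}^{1/2}$ and $\int_{\Omega}n|\nabla\varphi^{3/2}|\le\|\nabla\varphi^{3/2}\|_{L^{\infty}}\|n_{0}\|_{L^{1}}$ delivers \eqref{LEMLOCINT1} after squaring.

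For \eqref{LEMLOCINT2} I apply the same embedding to $v:=|\nabla c|^{2}\varphi^{3/2}$. Cauchy--Schwarz together with \eqref{NABLACL2} yields $\|v\|_{L^{1}(\Omega)}\le C\|\nabla c\|_{L^{2}(\Omega\cap B_{\delta})}$, while from $\nabla v=2(D^{2}c\,\nabla c)\varphi^{3/2}+|\nabla c|^{2}\nabla\varphi^{3/2}$ another application of Cauchy--Schwarz gives
\[
\|\nabla v\|_{L^{1}(\Omega)}\le 2\|\nabla c\|_{L^{2}(\Omega\cap B_{\delta})}\Bigl(\int_{\Omega}|D^{2}c|^{2}\varphi^{3}\Bigr)^{1/2}+\|\nabla\varphi^{3/2}\|_{L^{\infty}}\|\nabla c\|_{L^{2}(\Omega\cap B_{\delta})}^{2}.
\]
Squaring and exploiting the downgrade $\|\nabla c\|_{L^{2}(\Omega\cap B_{\delta})}^{2}\le C\|\nabla c\|_{L^{2}(\Omega\cap B_{\delta})}$ (again via \eqref{NABLACL2}) reduces \eqref{LEMLOCINT2} to establishing the localized Hessian bound
\[
\int_{\Omega}|D^{2}c|^{2}\varphi^{3}\le C\int_{\Omega}\frac{|\nabla n|^{2}}{n}\varphi^{3}+C\|\varphi\|_{W^{2,3/2}(\R^{2})}^{3}+C.
\]

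This remaining Hessian inequality I derive from the Bochner-type identity $\tfrac{1}{2}\Delta|\nabla c|^{2}=|D^{2}c|^{2}+n|\nabla c|^{2}+c\nabla n\cdot\nabla c$ (obtained by differentiating $\Delta c=nc$), tested against $\varphi^{3}$ and integrated by parts. The cross term $\int_{\Omega}c\nabla n\cdot\nabla c\,\varphi^{3}$ is controlled via Cauchy--Schwarz as $\gamma(\int_{\Omega}\frac{|\nabla n|^{2}}{n}\varphi^{3})^{1/2}(\int_{\Omega}n|\nabla c|^{2}\varphi^{3})^{1/2}$, and Young's inequality absorbs $\int_{\Omega}n|\nabla c|^{2}\varphi^{3}$ on the left; the boundary piece is tame thanks to the Robin condition $\nabla c\cdot\nu=\gamma-c$ and a trace inequality. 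Finally, the critical term $\int_{\Omega}|\nabla c|^{2}|\Delta\varphi^{3}|$ is attacked via H\"older with exponents $(3,3/2)$ and the two-dimensional Gagliardo--Nirenberg bound $\|\nabla c\|_{L^{6}(\Omega)}\le C\|\nabla c\|_{L^{2}(\Omega)}^{1/3}\|D^{2}c\|_{L^{2}(\Omega)}^{2/3}$, whereupon Young's inequality at exponent $3$ absorbs a small multiple of $\|D^{2}c\|_{L^{2}(\Omega)}^{2}$ into the left and produces the cubic factor $\|\Delta\varphi^{3}\|_{L^{3/2}}^{3}\le C\|\varphi\|_{W^{2,3/2}}^{3}$ (using $\Delta\varphi^{3}=3\varphi^{2}\Delta\varphi+6\varphi|\nabla\varphi|^{2}$ together with $|\nabla\varphi|\le K\varphi^{1/2}$ from Lemma~\ref{LEMSTEST}). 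The main technical obstacle is preserving a linear rather than quadratic dependence on $\|\nabla c\|_{L^{2}(\Omega\cap B_{\delta})}$ in \eqref{LEMLOCINT2}, which will be essential for the subsequent smallness argument driven by Proposition~\ref{LEMGRADC}; this is accomplished by spending one power of $\|\nabla c\|_{L^{2}(\Omega\cap B_{\delta})}$ against the uniform-in-time bound \eqref{NABLACL2} whenever a squared occurrence arises.
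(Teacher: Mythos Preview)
Your argument for \eqref{LEMLOCINT1} is essentially identical to the paper's: both apply the two-dimensional embedding $W^{1,1}(\Omega)\hookrightarrow L^{2}(\Omega)$ to $n\varphi^{3/2}$, use Cauchy--Schwarz on $\int_{\Omega}\varphi^{3/2}|\nabla n|$, and invoke mass conservation.

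For \eqref{LEMLOCINT2}, however, your Bochner-based route contains a genuine gap. After testing the identity against $\varphi^{3}$ and integrating by parts, you propose to control $\int_{\Omega}|\nabla c|^{2}|\Delta\varphi^{3}|$ via H\"older and the global Gagliardo--Nirenberg bound $\|\nabla c\|_{L^{6}(\Omega)}\le C\|\nabla c\|_{L^{2}(\Omega)}^{1/3}\|D^{2}c\|_{L^{2}(\Omega)}^{2/3}$, and then ``absorb a small multiple of $\|D^{2}c\|_{L^{2}(\Omega)}^{2}$ into the left.'' But the left-hand side of your Hessian inequality is the \emph{localized} quantity $\int_{\Omega}|D^{2}c|^{2}\varphi^{3}$; since $\varphi$ is a compactly supported cutoff, this absorption is impossible. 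Nor is a uniform-in-time bound on $\|D^{2}c\|_{L^{2}(\Omega)}$ available at this stage of the argument: elliptic regularity would require control of $\|n\|_{L^{2}(\Omega)}$, which is precisely what \eqref{LEMLOCINT1} and the subsequent lemmas are building toward. A secondary concern is the boundary contribution $\tfrac{1}{2}\int_{\partial\Omega}\partial_{\nu}|\nabla c|^{2}\,\varphi^{3}$: this involves second normal derivatives of $c$ and curvature terms, and the Robin condition $\partial_{\nu}c=\gamma-c$ alone does not make it ``tame'' without considerably more work than a single trace inequality.

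The paper sidesteps both difficulties by never introducing a pointwise Hessian of $c$. It writes $\int_{\Omega}|\nabla c|^{4}\varphi^{3}\le\|\nabla c\|_{L^{2}(\Omega\cap B_{\delta})}\bigl(\int_{\Omega}|\nabla c|^{6}\varphi^{6}\bigr)^{1/2}$, rewrites $(\nabla c)\varphi=\nabla(c\varphi)-c\nabla\varphi$, and then applies elliptic regularity in $W^{2,3/2}$ (combined with $W^{2,3/2}(\Omega)\hookrightarrow W^{1,6}(\Omega)$) directly to the product $c\varphi$, which satisfies a Robin problem with explicit data. Since $\Delta(c\varphi)=nc\varphi+2\nabla c\cdot\nabla\varphi+c\Delta\varphi$, the only $n$-dependent term is $\|nc\varphi\|_{L^{3/2}}\le\gamma\|n_{0}\|_{L^{1}}^{1/3}\bigl(\int_{\Omega}n^{2}\varphi^{3}\bigr)^{1/3}$, which is handled by \eqref{LEMLOCINT1}. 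This keeps every estimate intrinsically localized and produces exactly the cubic $\|\varphi\|_{W^{2,3/2}}^{3}$ dependence asserted in \eqref{LEMLOCINT2}.
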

\begin{proof}
Since the Sobolev inequality yields $C>0$ such that
\[
 \int_{\Omega}n^{2}\varphi^{3}   \le C\bke{ \|\nabla(n\varphi^{\frac{3}{2}})\|_{L^{1}(\Omega)}^{2}+\|n\varphi^{\frac{3}{2}}\|_{L^{1}(\Omega)}^{2}},
\]
after using   H\"older's inequality and $\int_{\Omega}n=\int_{\Omega}n_{0}$, we can find $C>0$,  independent of $\delta$ and $q$,   satisfying
\[
  \int_{\Omega}n^{2}\varphi^{3}   \le C\bke{ \|n_{0}\|_{L^{1}(\Omega)}  \int_{\Omega} \frac{|\nabla n|^{2}}{n}\varphi^{3} +\|n_{0}\|_{L^{1}(\Omega)}^{2}\| \varphi^{\frac{3}{2}}  \|_{W^{1,\infty}(\Omega)}^{2} }.
\]
This gives  \eqref{LEMLOCINT1}.

Next,  using  the H\"older  inequality,  direct computations, $(a+b)^{3}\le 4(a^{3}+b^{3})$ for $a,b\ge0$, and $c\le \gamma$, we note that
\begin{align}\label{LEM5_1}
\begin{aligned}[b]
 \int_{\Omega}|\nabla c|^{4}\varphi^{3}&\le   \|\nabla c\|_{L^{2}(\Omega\cap B_{\delta})}\bke{\int_{\Omega}|\nabla c|^{6}\varphi^{6}}^{\frac{1}{2}}
\\
 &= \|\nabla c\|_{L^{2}(\Omega\cap B_{\delta})}\|\nabla( c   \varphi)-c\nabla \varphi\|_{L^{6}(\Omega)}^{3}
\\
 &\le  \|\nabla c\|_{L^{2}(\Omega\cap B_{\delta})}(\|  c   \varphi\|_{W^{1,6}(\Omega)} + \|c\nabla \varphi\|_{L^{6}(\Omega)})^{3}
\\
& \le  4 \|\nabla c\|_{L^{2}(\Omega\cap B_{\delta})} ( \| c   \varphi \|_{W^{1,6}(\Omega)}^{3}+\gamma^{3}\| \nabla \varphi \|_{L^{6}(\Omega)}^{3}).
\end{aligned}
\end{align}
Since 
\[
\nabla(c\varphi)\cdot\nu=\nabla c \cdot \nu \varphi +c\nabla \varphi \cdot \nu =(\gamma-c)\varphi+c\nabla \varphi \cdot \nu  \quad\mbox{ on }\,\, \partial\Omega,
\] 
using $W^{2,\frac{3}{2}}(\Omega)\hookrightarrow W^{1,6}(\Omega)$ and 
the  elliptic regularity theory \cite[Thm. 2.3.3.6]{G85}, we can find $C>0$, independent of $\delta$ and $q$, such that
\begin{align*}
\begin{aligned}
&\|c\varphi \|_{W^{1,6}(\Omega)}
\\&\le C(  \|\Delta(c   \varphi)  \|_{L^{\frac{3}{2}}(\Omega)} +\| c   \varphi\|_{L^{\frac{3}{2}}(\Omega)} +  \| (\gamma-c)   \varphi \|_{W^{\frac{1}{3},\frac{3}{2}}(\partial\Omega)} +\| c\nabla \varphi \cdot \nu \|_{W^{\frac{1}{3},\frac{3}{2}}(\partial\Omega)} ).
\end{aligned}
\end{align*}
Using direct computations, H\"older's inequality, and $c\le\gamma$, we compute the first term on the right-hand-side above as
\begin{align*}
\begin{aligned}
\|  \Delta(c   \varphi)  \|_{L^{\frac{3}{2}}(\Omega)}&\le\| \Delta c   \varphi   \|_{L^{\frac{3}{2}}(\Omega)}+2\| \nabla c \cdot \nabla \varphi\|_{L^{\frac{3}{2}}(\Omega)}+\|  c  \Delta \varphi   \|_{L^{\frac{3}{2}}(\Omega)}
\\
&\le  \|nc     \varphi   \|_{L^{\frac{3}{2}}(\Omega)} +2\|\nabla c  \|_{L^{2}(\Omega )}\|\nabla \varphi\|_{L^{6}(\Omega)}+\gamma\|\Delta\varphi\|_{L^{\frac{3}{2}}(\Omega)}.
\end{aligned}
\end{align*}
Since  the trace inequality  and the smoothness of $\Omega$ yield $C>0$ satisfying 
\[
\| (\gamma-c) \varphi \|_{W^{\frac{1}{3},\frac{3}{2}}(\partial\Omega)}  \le C  \| (\gamma-c) \varphi \|_{W^{1,\frac{3}{2}}( \Omega)},  
\]
and
\[
\| c\nabla \varphi \cdot \nu \|_{W^{\frac{1}{3},\frac{3}{2}}(\partial\Omega)}\le C  \|c\nabla \varphi \|_{W^{1,\frac{3}{2}}( \Omega)},  
\]
using $c\le \gamma$ and H\"older's inequality, we have that with some $C>0$, independent of $\delta$ and $q$,

\begin{align*}
\begin{aligned}
  \| (\gamma-c)   \varphi \|_{W^{\frac{1}{3},\frac{3}{2}}(\partial\Omega)} +&\|   c\nabla\varphi \cdot \nu \|_{W^{\frac{1}{3},\frac{3}{2}}(\partial\Omega)} \\&\le    C(\gamma\|\varphi\|_{W^{2,\frac{3}{2}}(\Omega)}+\|\nabla c\|_{L^{2}(\Omega)}\|\varphi\|_{W^{1,6}(\Omega)}).
\end{aligned}
\end{align*}
  Note that by repeating the computations used to derive  \eqref{LEM2PF1},  we can find $C>0$ such that
\[
\|c\|_{H^{1}(\Omega)}\le  C\gamma|\partial\Omega|^{\frac{1}{2}}.
\]
Combining above estimates gives, after using $c\le \gamma$, that  with some $C>0$, independent of $\delta$ and $q$,
\[
\| c   \varphi \|_{W^{1,6}(\Omega)}\le C( \gamma\|n      \varphi   \|_{L^{\frac{3}{2}}(\Omega)} +\gamma|\partial\Omega|^{\frac{1}{2}} \|\varphi\|_{W^{1,6}(\Omega)} +\gamma\| \varphi\|_{W^{2,\frac{3}{2}}(\Omega)}   ).
\]
Plugging it into \eqref{LEM5_1}, since H\"older's inequality and $\int_{\Omega}n=\int_{\Omega}n_{0}$ imply
\begin{equation}\label{NPHI23}
\|n   \varphi   \|_{L^{\frac{3}{2}}(\Omega)} \le  \|n_{0}\|_{L^{1}(\Omega)}^{\frac{1}{3}}\bke{\int_{\Omega}n^{2}\varphi^{3}}^{\frac{1}{3}},
\end{equation}
 it follows  that there exists $C>0$, independent of $\delta$ and $q$, satisfying
\[
 \int_{\Omega}|\nabla c|^{4}\varphi^{3}\le  C \|\nabla c\|_{L^{2}(\Omega\cap B_{\delta})} \bke{   \int_{\Omega}n^{2}\varphi^{3} + \|  \varphi\|_{W^{1,6}(\Omega)}^{3} + \| \varphi\|_{W^{2,\frac{3}{2}}(\Omega)}^{3}   }.
 \]
Therefore, by \eqref{LEMLOCINT1} and $W^{2,\frac{3}{2}}(\Omega)\hookrightarrow W^{1,6}(\Omega)$, we can conclude  \eqref{LEMLOCINT2}.
\end{proof}
The spatially localized  $L\log L$-norm of $n$ is bounded uniformly in time:
\begin{lemma}\label{LEMLOGL}
Let $\Omega\subset \R^{2}$ be a bounded smooth domain. Let $(n,c)$ be a solution given by Lemma~\ref{LEM1}.
Assume that $\delta>0$  and  $\varphi_{\delta}$ is the function introduced in   Lemma~\ref{LEMSTEST}. Denote $\varphi(x)=\varphi_{\delta}(x-q)$ and $B_{\delta}=B_{\delta}(q)$ for  $q\in\overline{\Omega}$.  Then, there exist   $\delta_{*}>0$ independent of $q $ such that if $\delta<\delta_{*}$, then there exists $C=C(\delta)>0$ independent of $q $  satisfying
\[
\sup_{t<T_{\rm max}} \int_{\Omega}  n\log n(\cdot,t) \varphi^{3} \le C.
  \]
\end{lemma}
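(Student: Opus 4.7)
The plan is to derive a differential inequality for $E(t):=\int_\Omega n\log n\cdot \varphi^3$ by (formally) testing the $n$-equation with $(1+\log n)\varphi^3$; rigorously, one works with $\log(n+\sigma)\varphi^3$ and lets $\sigma\downarrow 0$. The no-flux boundary condition makes the boundary term in the integration by parts vanish, and writing $A(t):=\int_\Omega \frac{|\nabla n|^2}{n}\varphi^3$, a direct expansion yields
\begin{equation*}
\tfrac{d}{dt}E + A \;=\; I_2 + I_3 + I_4,
\end{equation*}
where $I_2:=\int_\Omega S\nabla c\cdot\nabla n\,\varphi^3$, $I_3:=-3\int_\Omega(1+\log n)\varphi^2\nabla\varphi\cdot\nabla n$, and $I_4:=3\int_\Omega n(1+\log n)\,S\nabla c\cdot\nabla\varphi\,\varphi^2$. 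The target is to establish $|I_2|+|I_3|+|I_4|\le \tfrac{3}{4}A+C(\delta)$ for $\delta$ sufficiently small.

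For $I_2$, Young's inequality isolates $\int_\Omega n|\nabla c|^2\varphi^3$. Applying Cauchy--Schwarz and the two inequalities of Lemma~\ref{LEMLOCINT} bounds this in terms of $\|\nabla c\|_{L^2(\Omega\cap B_\delta)}$, which Proposition~\ref{LEMGRADC} makes arbitrarily small once $\delta$ is small enough; elementary Young's inequalities then give $\int_\Omega n|\nabla c|^2\varphi^3 \le \epsilon A+C(\delta)$ for any preassigned $\epsilon>0$, so $|I_2|\le \tfrac{1}{4}A+C(\delta)$. An analogous Cauchy--Schwarz factorization of $I_4$ into $(\int_\Omega n(1+\log n)^2\varphi^2)^{1/2}$ times $(\int_\Omega n|\nabla c|^2\varphi^3)^{1/2}$ reduces $I_4$ to the same $|\nabla c|$ control together with the $(1+\log n)^2$ estimate used for $I_3$ below, yielding $|I_4|\le \tfrac{1}{4}A+C(\delta)$.

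The decisive term is $I_3$: here $|\nabla\varphi|\le K\varphi^{1/2}$ with $K=O(\delta^{-1})$ produces a large prefactor that cannot be tamed by smallness of $\nabla c$ alone. After Young's inequality, everything hinges on bounding $\int_\Omega n(1+\log n)^2\varphi^2$. The pointwise inequality
\begin{equation*}
n(1+\log n)^2 \le C + Cn^{3/2}\qquad (n\ge 0)
\end{equation*}
holds because $n|\log n|^k$ is bounded on $\{n\le 1\}$ and $(\log n)^2/\sqrt n\to 0$ as $n\to\infty$. Combined with the Cauchy--Schwarz factorization $\int n^{3/2}\varphi^2\le (\int n\varphi)^{1/2}(\int n^2\varphi^3)^{1/2}$, the mass-conservation bound $\int n = \int n_0$, and Lemma~\ref{LEMLOCINT}, this gives $\int_\Omega n(1+\log n)^2\varphi^2\le \eta A+C(\eta,\delta)$ for any $\eta>0$. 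Choosing first a small Young's-parameter $\lambda$ and then $\eta$ small enough to defeat the $K^2/\lambda$ factor yields $|I_3|\le \tfrac{1}{4}A+C(\delta)$.

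Assembling all three estimates produces $\tfrac{d}{dt}E+\tfrac{1}{4}A\le C(\delta)$. Since $n\log n\le Cn^2+C$, a further application of Lemma~\ref{LEMLOCINT} gives $E\le CA+C(\delta)$, hence $A\ge \mu E-C(\delta)$ for some $\mu>0$. Substituting yields the absorbing inequality $\tfrac{d}{dt}E+\tfrac{\mu}{4}E\le C(\delta)$, and Gr\"onwall's lemma then delivers $\sup_{t<T_{\rm max}}E(t)\le \max\{E(0),\,4C(\delta)/\mu\}$, with $E(0)$ finite because $n_0\in L^\infty$. The main obstacle throughout is the simultaneous absorption of $I_3$ and of $I_2,I_4$: $I_3$ needs the $n^{3/2}$-interpolation trick to beat the $K^2=O(\delta^{-2})$ weight, while $I_2$ and $I_4$ need $\|\nabla c\|_{L^2(\Omega\cap B_\delta)}$ to be tiny; both requirements are reconciled by taking $\delta<\delta_*$ via Proposition~\ref{LEMGRADC}.
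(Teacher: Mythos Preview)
Your argument is correct and follows essentially the same route as the paper: test the $n$-equation against $\log n\,\varphi^3$, control the three interaction terms via Lemma~\ref{LEMLOCINT}, Lemma~\ref{LEMSTEST}, the pointwise bound $n|\log n|^k\le C(n^{3/2}+1)$, and Proposition~\ref{LEMGRADC}, and close with an absorbing ODE for~$E$. The only differences are cosmetic---the paper uses $\log n$ rather than $1+\log n$ as test function (shifting the bookkeeping by a harmless $\frac{d}{dt}\int_\Omega n\varphi^3$) and employs slightly different H\"older/Young exponents, e.g.\ bounding $I_2$ directly by $A^{1/2}\bigl(\int n^2\varphi^3\bigr)^{1/4}\bigl(\int|\nabla c|^4\varphi^3\bigr)^{1/4}$ and handling $I_4$ via a $(4,\tfrac{4}{3})$-Young split into $\int|\nabla c|^4\varphi^3$ and $\int n^{4/3}|\log n|^{4/3}\varphi^{5/3}|\nabla\varphi|^{4/3}$---but the mechanism is identical.
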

\begin{proof}
We begin by noting that due to Proposition~\ref{LEMGRADC}, there exists  $\delta_{*}>0$ independent of $q\in\overline{\Omega}$ such that 
\begin{equation}\label{DELSTAR}
\sup_{t<T_{\rm max}}(\|S_{0}\|_{\mathcal{C}([0,\gamma])}C_{2}^{\frac{1}{4}}C_{3}^{\frac{1}{4}} \|\nabla c\|_{L^{2}(\Omega\cap B_{\delta})}^{\frac{1}{4}}+\frac{1}{4}C_{3}\|\nabla c\|_{L^{2}(\Omega\cap B_{\delta})}) \le \frac{1}{5}\quad\mbox{for}\quad\delta<\delta_{*},
\end{equation}
where $S_{0}$ is the function given in \eqref{S2}, and
$C_{2}$ and $C_{3}$ are the positive numbers given in Lemma~\ref{LEMLOCINT}.

Let $\delta<\delta_{*}$. From the $n$ equation  and the no-flux   condition, we observe that
\begin{align*}
\begin{aligned}
 &\frac{d}{dt}\int_{\Omega}  n\log n \varphi^{3}-\frac{d}{dt}\int_{\Omega}n\varphi^{3} 
 \\
 &=-\int_{\Omega} \nabla(\log n \varphi^{3})\cdot [\nabla n-nS(x,n,c)\cdot\nabla c] 
\\
& =-\int_{\Omega} \frac{|\nabla n|^{2}}{n}\varphi^{3}+\int_{\Omega}\nabla n  \cdot ( S(x,n,c)\cdot\nabla c) \varphi^{3}
  \\&\qquad-3\int_{\Omega} \log n\nabla n\cdot \nabla \varphi  \varphi^{2}+3\int_{\Omega}n\log n (S(x,n,c)\cdot\nabla c  ) \cdot\nabla \varphi  \varphi^{2}.
\end{aligned}
\end{align*}
By \eqref{S2},  $c\le \gamma$, and H\"older's inequality, we have  
\begin{align*}
\begin{aligned}
&\int_{\Omega} \nabla n\cdot  ( S(x,n,c) \cdot \nabla c )   \,\varphi^{3}
\\&\le\|S_{0}\|_{\mathcal{C}([0,\gamma])}\bke{\int_{\Omega} \frac{|\nabla n|^{2}}{n}\varphi^{3}}^{\frac{1}{2}}
\bke{\int_{\Omega}n^{2}\varphi^{3}}^{\frac{1}{4}}\bke{\int_{\Omega}|\nabla c|^{4}\varphi^{3}}^{\frac{1}{4}}.
\end{aligned}
\end{align*}
 This  gives,  by Lemma~\ref{LEMLOCINT}, Young's inequality, and \eqref{NABLACL2}, that there exists   $M=M(\delta)>0$, independent of $q$,  satisfying
\begin{align*}
\begin{aligned}
&\int_{\Omega} \nabla n \cdot (S(x,n,c) \cdot \nabla c )  \,\varphi^{3}
\\&\le   \|S_{0}\|_{\mathcal{C}([0,\gamma])}C_{2}^{\frac{1}{4}}C_{3}^{\frac{1}{4}}  
\|\nabla c\|_{L^{2}(\Omega\cap B_{\delta})}^{\frac{1}{4}} \int_{\Omega} \frac{|\nabla n|^{2}}{n}\varphi^{3}  
+\frac{1}{5} \int_{\Omega} \frac{|\nabla n|^{2}}{n}\varphi^{3}  +M.
\end{aligned}
\end{align*}
Next, we use Young's inequality, $a|\log a|^{2}\le 16e^{-2}a^{\frac{3}{2}}+4e^{-2}$ for $a\ge0$, Lemma~\ref{LEMSTEST}, and \eqref{NPHI23} to compute
\begin{align*}
\begin{aligned}
-3&\int_{\Omega} \log n\nabla n\cdot \nabla \varphi  \varphi^{2}
\\& \le \frac{1}{8}\int_{\Omega} \frac{|\nabla n|^{2}}{n}\varphi^{3}+ 18\int_{\Omega} n|\log n|^{2} \varphi |\nabla \varphi|^{2}
\\&\le \frac{1}{8}\int_{\Omega} \frac{|\nabla n|^{2}}{n}\varphi^{3}+ 18\int_{\Omega} (16e^{-2}n^{\frac{3}{2}}+4e^{-2}) \varphi |\nabla \varphi|^{2}
\\&
\le  \frac{1}{8}\int_{\Omega} \frac{|\nabla n|^{2}}{n}\varphi^{3}+ 18\cdot 16e^{-2}K^{2} \int_{\Omega}n^{\frac{3}{2}}\varphi^{\frac{3}{2}}+18\cdot 4e^{-2}K^{2}\int_{\Omega}\varphi^{2}
\\&
\le  \frac{1}{8}\int_{\Omega} \frac{|\nabla n|^{2}}{n}\varphi^{3}+ 18\cdot 16 e^{-2}K^{2}  \|n_{0}\|_{L^{1}(\Omega)}^{\frac{1}{2}}\bke{\int_{\Omega}n^{2}\varphi^{3}}^{\frac{1}{2}}+18\cdot 4e^{-2}K^{2}\int_{\R^{2}}\varphi^{2}.
\end{aligned}
\end{align*}
 It follows by  Young's inequality and \eqref{LEMLOCINT1}  that with some $M=M(\delta)>0$,
\[
-3\int_{\Omega} \log n\nabla n\cdot \nabla \varphi  \varphi^{2}\le\frac{1}{5}\int_{\Omega} \frac{|\nabla n|^{2}}{n}\varphi^{3}+M.
\]
Similarly, if we use \eqref{S2}, Young's inequality, $a^{\frac{4}{3}}|\log a|^{\frac{4}{3}}\le 16e^{-\frac{4}{3}} a^{\frac{3}{2}}+e^{-\frac{4}{3}}$ for $a\ge0$, Lemma~\ref{LEMSTEST}, and \eqref{NPHI23}, then we have
\begin{align*}
\begin{aligned}
3&\int_{\Omega}n\log n  ( S(x,n,c)  \cdot \nabla c  )  \cdot\nabla \varphi  \varphi^{2}
\\&\le  3\|S_{0}\|_{\mathcal{C}([0,\gamma])}\int_{\Omega} n|\log n| |\nabla c||\nabla \varphi| \varphi^{2}
\\&
\le \frac{1}{4}\int_{\Omega}|\nabla c|^{4}\varphi^{3}+\frac{3}{4}(3\|S_{0}\|_{\mathcal{C}([0,\gamma])})^{\frac{4}{3}}\int_{\Omega} n^{\frac{4}{3}}|\log n|^{\frac{4}{3}}\varphi^{\frac{5}{3}}|\nabla \varphi|^{\frac{4}{3}}
\\&
\le \frac{1}{4}\int_{\Omega}|\nabla c|^{4}\varphi^{3}+\frac{3}{4}(3\|S_{0}\|_{\mathcal{C}([0,\gamma])})^{\frac{4}{3}}K^{\frac{4}{3}}\int_{\Omega} (16e^{-\frac{4}{3}} n^{\frac{3}{2}}+e^{-\frac{4}{3}})\varphi^{\frac{3}{2}} 
\\&
\le \frac{1}{4}\int_{\Omega}|\nabla c|^{4}\varphi^{3}+12(3\|S_{0}\|_{\mathcal{C}([0,\gamma])})^{\frac{4}{3}}K^{\frac{4}{3}} e^{-\frac{4}{3}}\|n_{0}\|_{L^{1}(\Omega)}^{\frac{1}{2}}\bke{\int_{\Omega}n^{2}\varphi^{3}}^{\frac{1}{2}}\\&\qquad +\frac{3}{4}(3\|S_{0}\|_{\mathcal{C}([0,\gamma])})^{\frac{4}{3}}K^{\frac{4}{3}}e^{-\frac{4}{3}} \int_{\R^{2}} \varphi^{\frac{3}{2}}. 
\end{aligned}
\end{align*}
Thus,  by Lemma~\ref{LEMLOCINT}  and Young's inequality, we can find   $M=M(\delta)>0$  such that
\begin{align*}
\begin{aligned}
3\int_{\Omega}n\log n & ( S(x,n,c) \cdot \nabla c  )  \cdot\nabla \varphi  \varphi^{2}
\\&\le  \frac{1}{4}C_{3} \|\nabla c\|_{L^{2}(\Omega\cap B_{\delta})} \int_{\Omega} \frac{|\nabla n|^{2}}{n}\varphi^{3}+\frac{1}{5}\int_{\Omega} \frac{|\nabla n|^{2}}{n}\varphi^{3}+M.
\end{aligned}
\end{align*}
Combining above estimates gives that with some  $M=M(\delta)>0$,
\begin{align}\label{LEM6PF1}
\begin{aligned}[b]
\frac{d}{dt}&\int_{\Omega}  n\log n \varphi^{3}-\frac{d}{dt}\int_{\Omega}n\varphi^{3} +\frac{2}{5}\int_{\Omega} \frac{|\nabla n|^{2}}{n}\varphi^{3}
\\&\le  ( \|S_{0}\|_{\mathcal{C}([0,\gamma])}C_{2}^{\frac{1}{4}}C_{3}^{\frac{1}{4}}  \|\nabla c\|_{L^{2}(\Omega\cap B_{\delta})}^{\frac{1}{4}}+ \frac{1}{4}C_{3} \|\nabla c\|_{L^{2}(\Omega\cap B_{\delta})}) \int_{\Omega} \frac{|\nabla n|^{2}}{n}\varphi^{3}  +M.
\end{aligned}
\end{align}
We note that using $a\log a\le 2e^{-1}a^{\frac{3}{2}}$ for $a\ge0$, \eqref{NPHI23},  \eqref{LEMLOCINT1}, and Young's inequality, we can find $M=M(\delta)>0$  such that
\begin{align}\label{LEM6PF2}
\begin{aligned}[b]
\int_{\Omega}  n\log n \varphi^{3}-\int_{\Omega}n\varphi^{3} &\le \int_{\Omega} 2e^{-1}n^{\frac{3}{2}} \varphi^{\frac{3}{2}}
\\&\le 2e^{-1} \|n_{0}\|_{L^{1}(\Omega)}^{\frac{1}{2}}\bke{\int_{\Omega}n^{2}\varphi^{3}}^{\frac{1}{2}} 
\\&
\le  \frac{1}{5}\int_{\Omega} \frac{|\nabla n|^{2}}{n}\varphi^{3}+M.
\end{aligned}
\end{align}
 Thus,    adding both sides of \eqref{LEM6PF1} by 
\[
\int_{\Omega}  n\log n \varphi^{3}-\int_{\Omega}n\varphi^{3} 
\]
and using \eqref{DELSTAR}, and  \eqref{LEM6PF2}, we can deduce that with some  $M=M(\delta)>0$,
\[
\frac{d}{dt}\mathcal{F}(t)+\mathcal{F}(t) \le M, 
\]
where 
\[ 
\mathcal{F}(t)=\int_{\Omega}  n\log n (\cdot,t)\varphi^{3}- \int_{\Omega}n(\cdot,t)\varphi^{3}.
\] 
Since solving this ordinary differential inequality gives
\[
\mathcal{F}(t)\le \mathcal{F}(0)e^{-t}+M(1-e^{-t}),
\]
with 
$
\int_{\Omega}n\varphi^{3}\le\int_{\Omega}n_{0}$, 
 we can conclude  the desired estimate.
\end{proof}
As a direct consequence,  $L \log L$-norm of $n$ over $\Omega$ is also bounded.
\begin{cor}\label{COR1}
Let $\Omega\subset \R^{2}$  be a bounded smooth domain.
There exists $C>0$ such that
\[
\sup_{t<T_{\rm max}} \int_{\Omega}   n \log n (\cdot,t)   \le C.
  \]
\end{cor}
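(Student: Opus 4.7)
The plan is a straightforward finite covering argument that upgrades the local-in-space bound of Lemma~\ref{LEMLOGL} to a global bound, together with the elementary observation that $n \log n$ is bounded below pointwise.

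First I would fix any $\delta \in (0, \delta_*)$, where $\delta_*$ is the radius supplied by Lemma~\ref{LEMLOGL}, so that
\[
\sup_{t<T_{\rm max}}\int_{\Omega} n\log n(\cdot,t)\,\varphi_{\delta}(x-q)^3\,dx \le C_0
\]
for some $C_0 = C_0(\delta) > 0$ that is independent of $q \in \overline{\Omega}$. Since $\overline{\Omega}$ is compact and is covered by the open collection $\{B_{\delta/2}(q)\}_{q\in\overline{\Omega}}$, one may select finitely many points $q_1,\ldots,q_N \in \overline{\Omega}$ with $\overline{\Omega}\subset \bigcup_{i=1}^N B_{\delta/2}(q_i)$. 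By Lemma~\ref{LEMSTEST}, the cutoff $\varphi_\delta(\cdot - q_i) \equiv 1$ on $B_{\delta/2}(q_i)$.

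The next step is to handle the sign of $n \log n$. Since $s\log s \ge -e^{-1}$ for every $s \ge 0$, the nonnegative function $\phi(s) := s \log s + e^{-1}$ satisfies $0 \le \phi(n) \le n\log n + e^{-1}$ pointwise. Using the covering, monotonicity of the integral, and $\varphi_\delta^3 \le 1$,
\[
\int_{\Omega} \phi(n)\,dx \le \sum_{i=1}^N \int_{\Omega\cap B_{\delta/2}(q_i)} \phi(n)\,dx \le \sum_{i=1}^N \int_{\Omega} \phi(n)\,\varphi_\delta(x-q_i)^3\,dx,
\]
and each summand on the right is bounded by $C_0 + e^{-1}|\Omega|$ uniformly in $t<T_{\rm max}$ by Lemma~\ref{LEMLOGL}. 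Finally, since $n\log n \le \phi(n)$, we obtain
\[
\sup_{t<T_{\rm max}}\int_{\Omega} n\log n(\cdot,t)\,dx \le N\bigl(C_0 + e^{-1}|\Omega|\bigr),
\]
which is the desired conclusion.

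There is essentially no technical obstacle: Lemma~\ref{LEMLOGL} already delivers the key uniformity in $q$ (which is what the proposition's localized $L^2$ smallness of $\nabla c$ was engineered to provide), and the covering/positivity reduction is routine. The one point worth being careful about is that $n\log n$ takes negative values, so the covering cannot be applied directly; shifting by $e^{-1}$ to work with the nonnegative quantity $\phi(n)$ is the cleanest way to proceed.
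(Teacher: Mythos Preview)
Your proof is correct and follows essentially the same approach as the paper: both arguments shift $n\log n$ by $e^{-1}$ to obtain a nonnegative integrand, invoke Lemma~\ref{LEMLOGL} for the uniform local bound, and then apply a finite covering of $\overline{\Omega}$ by balls $B_{\delta/2}(q_i)$ on which $\varphi_\delta^3\equiv 1$. The only difference is cosmetic ordering of the steps.
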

\begin{proof}
Let  $\delta>0$, and let  $\varphi_{\delta}$ be a   function given  in Lemma~\ref{LEMSTEST}. Denote $\varphi(x)=\varphi_{\delta}(x-q)$ and $B_{\delta}=B_{\delta}(q)$ for  $q\in\overline{\Omega}$.
Using Lemma~\ref{LEMLOGL}, $a\log a+e^{-1}\ge  0$ for $a\ge0$, and $\varphi=1$ in $B_{\frac{\delta}{2}}$, we can find $\delta>0$ and $C>0$ both independent of $q $ such that
\[
\sup_{t<T_{\rm max}}  \int_{\Omega\cap B_{\frac{\delta}{2}}(q)}  (n\log n(\cdot,t)+e^{-1})  \le \sup_{t<T_{\rm max}}  \int_{\Omega}  (n\log n(\cdot,t)+e^{-1}) \varphi^{3} \le C.
\]
 Since the open covering $ \bigcup_{q\in\overline{\Omega}}B_{\frac{\delta}{2}}(q)$ of compact set $\overline{\Omega}$ has a finite subcovering $ \bigcup_{i=1}^{N}B_{\frac{\delta}{2}}(q_{i})$, $q_{i}\in\overline{\Omega}$, we have that with some $C>0$,
\[
\sup_{t<T_{\rm max}} \int_{\Omega }  (n\log n(\cdot,t)+e^{-1})   \le \sum_{i=1}^{N}\sup_{t<T_{\rm max}}  \int_{\Omega\cap B_{\frac{\delta}{2}}(q_{i})}  (n\log n(\cdot,t)+e^{-1})\le C.
\]
This gives the desired bound.
\end{proof}
To obtain higher integrability of $n$, we prepare the following lemma which can be seen as a generalization of  \cite[Lem.~2.4]{FS16}.
\begin{lemma}\label{LEM7}
Let $\Omega\subset \R^{2}$ be a bounded smooth domain. There exists  $C=C(\Omega)>0$ such that for any $p\ge1$, $s>1$, $\varepsilon>0$, and non-negative $f\in  \mathcal{C}^{1}(\overline{\Omega}) $, 
\begin{align*}
\begin{aligned}
 \int_{\Omega} f^{p +1}
  &\le C\frac{(p+1)^{2}}{\log s}\int_{\Omega} (f\log f+e^{-1})\int_{\Omega}f^{p -2}|\nabla f|^{2} 
  \\&\qquad\qquad+ (4C)^{1+\frac{\varepsilon}{2}}\bke{\int_{\Omega}f^{\frac{\varepsilon}{2}\frac{p +1}{1+\varepsilon}}}^{\frac{2(1+\varepsilon)}{\varepsilon}}+6s^{p  +1}|\Omega|.
\end{aligned}
\end{align*}
\end{lemma}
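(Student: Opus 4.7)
The plan is to bound $\int_\Omega f^{p+1}$ via a threshold truncation at level $s$, a logarithmic gain on the upper-level set $\{f>s\}$, and a two-dimensional Gagliardo--Nirenberg interpolation that separates the resulting weighted integral into the product form of the claim. Throughout I set $u := f^{p/2}$, so that $u^2 = f^p$ and $\int_\Omega f^{p-2}|\nabla f|^2 = (4/p^2)\int_\Omega |\nabla u|^2$; the prefactor $(p+1)^2$ on the right of the claimed inequality is precisely what is needed to absorb this $p^{-2}$ loss when reverting back to $f$.

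First I decompose $\int_\Omega f^{p+1} = \int_{\{f\le s\}}f^{p+1} + \int_{\{f>s\}}f^{p+1}$ and estimate the sub-level contribution by $s^{p+1}|\Omega|$, which after consolidating the small constants produced later by Young's inequality yields the term $6s^{p+1}|\Omega|$. On $\{f>s\}$ the inequality $\log f>\log s>0$ gives $f\le(f\log f)/\log s$, and since $f\log f\ge 0$ there while $h:=f\log f+e^{-1}\ge 0$ on all of $\Omega$ (from $t\log t\ge -e^{-1}$ for $t\ge 0$), this produces
\[
\int_{\{f>s\}} f^{p+1} \le \frac{1}{\log s}\int_\Omega h\, u^2,
\]
which introduces the $1/\log s$ factor and reduces the problem to controlling $\int_\Omega h u^2$.

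For the central step I apply H\"older's inequality with conjugate exponents $1+\varepsilon/2$ and $(2+\varepsilon)/\varepsilon$, giving $\int_\Omega h u^2 \le \|h\|_{L^{1+\varepsilon/2}(\Omega)}\|u\|_{L^{(4+2\varepsilon)/\varepsilon}(\Omega)}^2$, and estimate the $L^r$-norm of $u$ by the two-dimensional Gagliardo--Nirenberg inequality $\|u\|_{L^r(\Omega)}^2 \le C\|\nabla u\|_{L^2(\Omega)}^{2-4/r}\|u\|_{L^2(\Omega)}^{4/r}$ with $r=(4+2\varepsilon)/\varepsilon$, producing the factor $\|\nabla u\|_{L^2}^{4/(2+\varepsilon)}\|u\|_{L^2}^{2\varepsilon/(2+\varepsilon)}$. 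Young's inequality with exponents $(2+\varepsilon)/2$ and $(2+\varepsilon)/\varepsilon$, chosen precisely so that $\|\nabla u\|_{L^2}$ is isolated to the second power, then splits the product into a first piece proportional to $\bigl(\int_\Omega h^{1+\varepsilon/2}\bigr)\cdot\|\nabla u\|_{L^2}^2$ and a second piece equal to $\|u\|_{L^2}^2=\int_\Omega f^p$. Re-expressing this last quantity as $(\int_\Omega f^q)^{2(1+\varepsilon)/\varepsilon}$ with $q=\varepsilon(p+1)/(2(1+\varepsilon))$—so that the inner and outer exponents multiply to exactly $p+1$—produces the second term in the conclusion, while the constant $(4C)^{1+\varepsilon/2}$ records the accumulated Young's-inequality constants.

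The main obstacle is replacing the $\int_\Omega h^{1+\varepsilon/2}$ produced by the H\"older step with the smaller $\int_\Omega h$ appearing in the claim. For this I exploit the specific pointwise structure $h=f\log f+e^{-1}$: using $\log t\le C_\delta t^\delta$ for $t\ge 1$ and any $\delta>0$, one has $h^{1+\varepsilon/2}\le C_\delta f^{(1+\delta)(1+\varepsilon/2)}+C$ on $\{f\ge 1\}$, and on $\{f<1\}$ the function $h$ is uniformly bounded by $e^{-1}$; the surplus is absorbed into the lower-order $L^q$-of-$f$ term already present on the right, where the freedom in $\delta$ is matched to the freedom in $\varepsilon$. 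This calibration explains why $\varepsilon>0$ is arbitrary and why $(4C)^{1+\varepsilon/2}$ deteriorates as $\varepsilon\downarrow 0$; combining with the back-substitution $\int_\Omega|\nabla u|^2 = (p^2/4)\int_\Omega f^{p-2}|\nabla f|^2$ delivers the stated inequality.
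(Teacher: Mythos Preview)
Your argument has a genuine gap at the step where you try to recover the factor $\int_\Omega h=\int_\Omega(f\log f+e^{-1})$. After H\"older you obtain $\|h\|_{L^{1+\varepsilon/2}}\cdot\|u\|_{L^r}^2$, and your proposed fix $h^{1+\varepsilon/2}\le C_\delta f^{(1+\delta)(1+\varepsilon/2)}+C$ does not bring you back to $\int_\Omega h$: it replaces the coefficient of $\|\nabla u\|_{L^2}^2$ by $\int_\Omega f^{(1+\delta)(1+\varepsilon/2)}$, and this product still carries the \emph{unbounded} gradient factor, so it cannot be ``absorbed into the lower-order $L^q$-of-$f$ term'' as you claim. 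There is simply no route from $\int_\Omega h^{1+\varepsilon/2}\cdot\|\nabla u\|_{L^2}^2$ to $\int_\Omega h\cdot\|\nabla u\|_{L^2}^2$ plus controllable errors. Separately, the sentence ``re-expressing $\int_\Omega f^{p}$ as $(\int_\Omega f^{q})^{2(1+\varepsilon)/\varepsilon}$'' is not an identity and not an inequality in the needed direction; these are different quantities and no justification is given.

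The mechanism in the paper (via \cite{FS16}) is different and avoids this obstacle. One works with the truncation $w=(f^{(p+1)/2}-s^{(p+1)/2})_+$, uses the two-dimensional embedding $W^{1,1}(\Omega)\hookrightarrow L^{2}(\Omega)$ to get $\int_\Omega w^2\le C(\|\nabla w\|_{L^1}^2+\|w\|_{L^1}^2)$, and then applies Cauchy--Schwarz to $\|\nabla w\|_{L^1}=\tfrac{p+1}{2}\int_{\{f>s\}}f^{(p-1)/2}|\nabla f|\le \tfrac{p+1}{2}\bigl(\int_{\{f>s\}}f\bigr)^{1/2}\bigl(\int_\Omega f^{p-2}|\nabla f|^2\bigr)^{1/2}$. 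The logarithmic gain is used \emph{only} on the low-power factor, $\int_{\{f>s\}}f\le (\log s)^{-1}\int_\Omega(f\log f+e^{-1})$, which is exactly how the product $\int_\Omega h\cdot\int_\Omega f^{p-2}|\nabla f|^2$ appears. The remaining term $\|w\|_{L^1}^2$ is then estimated by H\"older and Young with a small multiple of $\int_\Omega f^{p+1}$ absorbed into the left-hand side, which is where the $(\int_\Omega f^{\frac{\varepsilon}{2}\frac{p+1}{1+\varepsilon}})^{2(1+\varepsilon)/\varepsilon}$ term actually arises. Your approach applies the logarithmic gain to the full integrand $f^{p+1}$, and that is why the separation into the stated product cannot be recovered afterwards.
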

\begin{proof}
We recall from \cite[(2.1)--(2.5)]{FS16} that  there exists   $C=C(\Omega)>0$ such that
\begin{align*}
\begin{aligned}
\int_{\Omega}f^{p +1}\le C\frac{(p+1)^{2}}{2\log s}\int_{\Omega} (f\log f+e^{-1})\int_{\Omega}f^{p -2}|\nabla f|^{2}+2C\|  w\|_{L^{1}(\Omega)}^{2}+3s^{p  +1}|\Omega|,
\end{aligned}
\end{align*}
where 
\[
w=\max\{  f^{\frac{p +1}{2}}  - s^{\frac{p+1}{2}}   ,0\}.
\]
Using a direct computation, and H\"older's and Young's inequalities, we compute 
\begin{align*}
\begin{aligned}
\|  w\|_{L^{1}(\Omega)}^{2}\le \biggr{(}\int_{\{f>s\}}f^{\frac{p +1}{2}}\biggr{)}^{2}&\le\bke{\int_{\Omega}f^{\frac{p +1}{2+\varepsilon}}f^{\frac{\varepsilon}{2}\frac{p +1}{2+\varepsilon}}}^{2}
\\
&\le \bke{\int_{\Omega}f^{ p +1 }}^{\frac{2}{2+\varepsilon}}\bke{\int_{\Omega}f^{\frac{\varepsilon}{2}\frac{p +1}{1+\varepsilon}}}^{\frac{2(1+\varepsilon)}{2+\varepsilon}}
\\
&\le \frac{1}{4C} \int_{\Omega}f^{ p +1 } +\bke{\frac{8C}{2+\varepsilon}}^{\frac{\varepsilon}{2}} {\frac{\varepsilon}{2+\varepsilon}}\bke{\int_{\Omega}f^{\frac{\varepsilon}{2}\frac{p +1}{1+\varepsilon}}}^{\frac{2(1+\varepsilon)}{\varepsilon}}.
\end{aligned}
\end{align*}
Since
$
\bke{\frac{8C}{2+\varepsilon}}^{\frac{\varepsilon}{2}} {\frac{\varepsilon}{2+\varepsilon}}  \le (4C)^{\frac{\varepsilon}{2}}$,
 we can deduce the desired result.
\end{proof}
We are ready to prove Theorem~\ref{THM1}.  
\begin{pfthm1}
Let $(n,c)$ be a solution given by Lemma~\ref{LEM1}.
Once we have a uniform-in-time bound for $\|n\|_{L^{p}(\Omega)}$ with some $p>d=2$, then by Lemma~\ref{LEM2}~(i), $W^{2,p}(\Omega)\hookrightarrow \mathcal{C}^{1}(\overline{\Omega})$, \eqref{S2} and $c\le \gamma$, we have a uniform-in-time bound of $S(x,n,c)\nabla c$. Then, applying a Moser-type iteration argument to
\begin{align*}
\begin{aligned}
\frac{1}{p}&\frac{d}{dt}\int_{\Omega}n^{p}+\frac{4(p-1)}{p^{2}}\int_{\Omega}|\nabla n^{\frac{p}{2}}|^{2}
\\&=\frac{2(p-1)}{p}\int_{\Omega}n^{\frac{p}{2}}\nabla n^{\frac{p}{2}} \cdot(  S(x,n,c) \cdot \nabla c  ) 
\\& \le \frac{2(p-1)}{p^{2}}\int_{\Omega}|\nabla n^{\frac{p}{2}}|^{2}+\frac{(p-1)^{2}}{p }\|S(x,n,c)\nabla c\|_{L^{\infty}(\Omega)}^{2}\int_{\Omega} n^{p},\qquad p\ge1,
\end{aligned}
\end{align*}
we can find uniform-in-time bound for $n$ and   Theorem~\ref{THM1} follows by Lemma~\ref{LEM1}. Thus, it is enough to show that there exists $C>0$ satisfying
\begin{equation}\label{L3BDN}
\sup_{t<T_{\rm max}}\int_{\Omega}n^{3}(\cdot,t) \le C.
\end{equation}
 To this end, multiplying the $n$ equation by $n^{2}$ and integrating over $\Omega$, we observe that
\[
\frac{1}{3}\frac{d}{dt}\int_{\Omega}n^{3}+\frac{8}{9}\int_{\Omega} |\nabla n^{\frac{3}{2}} |^{2}=\frac{4}{3}\int_{\Omega}n^{\frac{3}{2}}\nabla n^{\frac{3}{2}}   \cdot( S(x,n,c) \cdot \nabla c ).
\]
Using \eqref{S2}, $c\le \gamma$, and H\"older's inequality, we compute the right-hand-side as
\begin{align*}
\begin{aligned}
\frac{4}{3}&\int_{\Omega}n^{\frac{3}{2}}\nabla n^{\frac{3}{2}}  \cdot( S(x,n,c) \cdot \nabla c )  \\&\le \frac{4}{3}\|S_{0}\|_{\mathcal{C}([0,\gamma])}\int_{\Omega} n^{\frac{3}{2}} |\nabla n^{\frac{3}{2}}||\nabla c|
\\&
\le \frac{4}{3}\|S_{0}\|_{\mathcal{C}([0,\gamma])}\|n\|_{L^{4}(\Omega)}^{\frac{3}{2}}\|   \nabla n^{\frac{3}{2}} \|_{L^{2}(\Omega)}\|c\|_{W^{1,8}(\Omega)}.
\end{aligned}
\end{align*}
Then, we  use the Gagliardo-Nirenberg interpolation inequality,
\[
\|f\|_{W^{1,8}(\Omega)} \le C(\|f\|_{L^{\infty}(\Omega)}^{\frac{1}{2}}\|f\|_{W^{2,4}(\Omega)}^{\frac{1}{2}}+\|f\|_{L^{\infty}(\Omega)})\quad\mbox{ for all }\quad f\in \mathcal{C}^{2}(\overline{\Omega}),
\]
Lemma~\ref{LEM2}~(i), and   $c\le \gamma$ to find  $C>0$ such that
\[
\|c\|_{W^{1,8}(\Omega)}\le C (\|n\|_{L^{4}(\Omega)}^{\frac{1}{2}}+1).
\]
Since Lemma~\ref{LEM7} with $(f,p,\varepsilon)=(n,3,1)$ and Corollary~\ref{COR1} yield $C>0$ independent of $s>1$ satisfying
\[
\int_{\Omega} n^{4}
 \le  \frac{C}{\log s} \int_{\Omega} |\nabla n^{\frac{3}{2}}|^{2} + (4C)^{\frac{3}{2}}\bke{\int_{\Omega}n_{0}}^{4}+6s^{4}|\Omega|,
\]
combining above estimates, after using Young's inequality, we can find $C >0$  independent of $s>1$ such that
\begin{align*}
\begin{aligned}
\frac{1}{3}&\frac{d}{dt}\int_{\Omega}n^{3}+\frac{8}{9}\int_{\Omega} |\nabla n^{\frac{3}{2}} |^{2}
\\&
\le  \frac{4}{3}\|S_{0}\|_{\mathcal{C}([0,\gamma])}\|n\|_{L^{4}(\Omega)}^{\frac{3}{2}}\|   \nabla n^{\frac{3}{2}} \|_{L^{2}(\Omega)}\|c\|_{W^{1,8}(\Omega)}
 \\&\le C \bke{ \frac{1}{\sqrt{\log s}}\|   \nabla n^{\frac{3}{2}} \|_{L^{2}(\Omega)}+1+s^{2} }\|   \nabla n^{\frac{3}{2}} \|_{L^{2}(\Omega)}.
\end{aligned}
\end{align*}
If we take sufficiently large $s$ and use Young's inequality, then with some $C>0$,
\[
 \frac{d}{dt}\int_{\Omega}n^{3}+ \int_{\Omega} |\nabla n^{\frac{3}{2}} |^{2}\le C.
\]
This implies, by the Gagliardo-Nirenberg type inequality,
\[
\|f\|_{L^{3}(\Omega)}^{3}\le \|\nabla f^{\frac{3}{2}}\|_{L^{2}(\Omega)}^{2}+C\|f\|_{L^{1}(\Omega)}^{3}\quad\mbox{for all}\quad f\in\mathcal{C}^{1}(\overline{\Omega}),
\] 
and $\int_{\Omega}n=\int_{\Omega}n_{0}$, that with some $C>0$,
\[
 \frac{d}{dt}\int_{\Omega}n^{3}+\int_{\Omega}n^{3}\le C.
\]
Therefore, we can deduce \eqref{L3BDN}.
\end{pfthm1} 

\section{Case of scalar sensitivity in general dimensions}\label{SEC4}
Throughout this section, let
$
\Omega=B_{R}(0)=\{x\in\R^{d} \,|\,r=|x|<R \} 
$, and $n_{0}$ 
be radial. 

In the radially symmetric setting, two equations of \eqref{MODEL0}   can be
written as
\[
n_{t}= r^{1-d} \bke{r^{d-1} n_{r}}_{r}- r^{1-d} \bke{ r^{d-1}n\chi(r,n,c)c_{r}   }_{r},\quad  r^{1-d} \bke{r^{d-1} c_{r}}_{r}=nc.
\]
Thus,  the cumulative mass distribution $Q$ defined by
\begin{equation}\label{DEFQ}
Q(r,t):=\int_{B_{r}(0)}n(x,t) \,dx=\sigma_{d}\int_{0}^{r}\rho^{d-1} n(\rho,t)\,d\rho
\end{equation}
satisfies  
\begin{equation}\label{MODEL_R}
  Q_{t}=r^{d-1}\bke{ r^{1-d} Q_{r}  }_{r}-Q_{r}\chi(r,n,c)c_{r},\qquad  r<R, t<T_{\rm max}.
\end{equation}
We note that    
\[
Q(R,t)=\|n_{0}\|_{L^{1}(\Omega)},
\]
and  
\begin{equation}\label{diffQC}
Q_{r} \ge0, \qquad c_{r} =r^{1-d}\int_{0}^{r}\rho^{d-1}nc\,d\rho \ge0,\qquad  r<R, t<T_{\rm max}.
\end{equation}
The non-negativities \eqref{diffQC} and   $\chi\ge0$ yield an upper bound for   
 $Q$ stated below.
\begin{lemma}\label{lem2}
Let $Q$ be the cumulative mass distribution defined in \eqref{DEFQ}.
Then, there exists  $M_{0}=M_{0}(d,R,\norm{n_{0}}_{L^{1}(\Omega)},\norm{n_{0}}_{L^{\infty}(\Omega)})\ge0$ such that
\[
Q(r,t)\le M_{0}r^{d}\quad\mbox{for}\quad r<R,   t < T_{\rm max}.
\]
\end{lemma}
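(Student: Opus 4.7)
The plan is to construct an explicit supersolution of the form $\bar Q(r)=M_0 r^d$ for the cumulative mass $Q$ and conclude by comparison. The key observation is that the advective term in the equation \eqref{MODEL_R} for $Q$ has a favorable sign: since $\chi\ge0$, $Q_r\ge0$ and $c_r\ge0$ by \eqref{diffQC}, we have $Q_r\chi(r,n,c)c_r\ge 0$, and therefore
\[
Q_t \;\le\; r^{d-1}\bigl(r^{1-d}Q_r\bigr)_r \;=\; Q_{rr}-(d-1)r^{-1}Q_r, \qquad 0<r<R,\ 0<t<T_{\max}.
\]
This reduces the matter to a scalar linear comparison for the operator $\mathcal{L}:=\partial_t-\partial_r^2+(d-1)r^{-1}\partial_r$.

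Next I would choose the constant $M_0$ so that the comparison works on the parabolic boundary. A direct check gives $\partial_t \bar Q=0$ and $\bar Q_{rr}-(d-1)r^{-1}\bar Q_r = d(d-1)M_0 r^{d-2} - d(d-1)M_0 r^{d-2} = 0$, so that $w:=Q-\bar Q$ satisfies
\[
\mathcal{L}w \;=\; w_t - w_{rr} + (d-1)r^{-1} w_r \;\le\; -Q_r \chi(r,n,c)c_r \;\le\; 0.
\]
On the lateral boundaries, $w(0,t)=0$ (since $Q(0,t)=0$) and $w(R,t)=\|n_0\|_{L^1(\Omega)}-M_0 R^d\le 0$ provided $M_0\ge \|n_0\|_{L^1(\Omega)}R^{-d}$. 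For the initial data, the crude estimate $Q(r,0)\le |B_r(0)|\,\|n_0\|_{L^\infty(\Omega)}=\tfrac{\sigma_d}{d}r^d\|n_0\|_{L^\infty(\Omega)}$ gives $w(r,0)\le 0$ provided $M_0\ge \tfrac{\sigma_d}{d}\|n_0\|_{L^\infty(\Omega)}$. I would therefore fix
\[
M_0 \;:=\; \max\Bigl\{ \tfrac{\sigma_d}{d}\|n_0\|_{L^\infty(\Omega)},\ \|n_0\|_{L^1(\Omega)}R^{-d}\Bigr\},
\]
which makes $w\le 0$ on the parabolic boundary.

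The only genuine technical point is the singular coefficient $(d-1)r^{-1}$ of $\mathcal{L}$ at $r=0$, which prevents a direct invocation of a textbook parabolic maximum principle on $[0,R]\times[0,T]$. I would handle this by applying the maximum principle on the non-degenerate cylinder $[\varepsilon,R]\times[0,T]$ for arbitrary $\varepsilon\in(0,R)$: on this set the coefficients of $\mathcal{L}$ are smooth and bounded, $\mathcal{L}w\le 0$ classically by the regularity provided in Lemma~\ref{LEM1}, and on the parabolic boundary we have $w\le 0$ at $r=R$ and at $t=0$, while at $r=\varepsilon$ continuity of $w$ together with $w(0,t)=0$ gives $w(\varepsilon,t)=o(1)$ as $\varepsilon\to 0$. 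A standard argument (subtracting a small multiple of $e^{\lambda t}$ to kill the boundary error at $r=\varepsilon$ and then sending $\varepsilon\to 0$, or alternatively noting that $w$ is continuous up to $r=0$ and applying the weak maximum principle) yields $w\le 0$ on $[0,R]\times[0,T_{\max})$, i.e. $Q(r,t)\le M_0 r^d$. The dependence of $M_0$ on the stated quantities is then evident from its definition, so any $T<T_{\max}$ works.
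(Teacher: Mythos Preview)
Your proposal is correct and follows essentially the same approach as the paper: both compare $Q$ with the explicit supersolution $\bar Q(r)=M_0 r^d$ after dropping the nonnegative advective term, with the identical choice of $M_0$. The only cosmetic difference is in handling the singular coefficient at $r=0$: the paper uses a first-crossing argument with the auxiliary function $F=(Q-\bar Q)e^{-t}$ (noting that a first touching point with $F=\varepsilon$ must occur at some $r_1\in(0,R)$, where the coefficients are regular), whereas you restrict to $[\varepsilon,R]$ and pass to the limit; both devices are standard and equivalent here.
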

\begin{proof}
We use a comparison argument.
 Due to \eqref{diffQC} and  $\chi\ge0$, it follows  from \eqref{MODEL_R}  that
\[
Q_{t}\le r^{d-1}\bke{r^{1-d} Q_{r}  }_{r}.
\]
Define
\[
M_{0}: =\max\bket{\frac{1}{R^{d}}\norm{n_{0}}_{L^{1}(\Omega)},\,\,    \frac{\sigma_{d}}{d}\norm{n_{0}}_{L^{\infty}(\Omega)} },
\]   
and 
\[
W(r):=M_{0} r^{d}.
\]
Then, $Q(R,t)\le W(R)$, $Q(r,0)\le W(r)$, and
\[
0=  r^{d-1}\bke{r^{1-d}W_{r}  }_{r}.
\]
Let $\varepsilon>0$ be given, and we now show that $F$ defined by
\[
F(r,t):=(Q(r,t)-W(r,t))\exp(- t)
\]
can not attain value $\varepsilon$ as long as solution exists. Note that $F(0,t)=0$, $F(R,t)\le0$, $F(r,0)\le 0$, and
\begin{align*}
\begin{aligned}
F_{t}&=(Q_{t}-W_{t})\exp(-t)-F
\\&\le  r^{d-1}\bke{r^{1-d}F_{r}  }_{r}-  F
\\&=F_{rr}+(1-d)r^{-1} F_{r}-  F.
\end{aligned}
\end{align*}
Assume to the contrary that $F(r_{1},t_{1})=\varepsilon$ for the first time $t_{1}<T_{\rm max}$. Then,  $r_{1}\not=0$ or $R$ and   
\[
0\le F_{t}(r_{1},t_{1}),\qquad F_{rr}(r_{1},t_{1})\le 0,
\]
\[
(1-d)r_{1}^{-1} F_{r}(r_{1},t_{1})=0,\qquad - F(r_{1},t_{1})=- \varepsilon<0,
\]
which leads to a contradiction. Since $\varepsilon>0$ is arbitrary, $F\le0$ and the desired bound follows.
\end{proof}
Due to Lemma~\ref{lem2},   for each   $t<T_{\rm max}$,  there exist  
 a radius  $r_{t}\in[0,R]$ and a   number   $m_{0}>0$ satisfying $c(r_{t},t)\ge m_{0}$:
\begin{lemma}\label{lem3}
Let $(n,c)$ be the solution given by Lemma~\ref{LEM1}, and let $M_{0}$ be a number given in Lemma~\ref{lem2}. Then, for each   $t<T_{\rm max}$,  there exists a radius $r_{t}\in[0,R]$ such that 
\begin{equation}\label{LEM3_1}
c(r_{t},t)\ge m_{0}:=\frac{\gamma}{2}\bke{ \frac{M_{0}R}{\sigma_{d}}+1 }^{-1}.
\end{equation}
\end{lemma}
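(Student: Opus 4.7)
The plan is to show that one can simply take $r_t=R$, i.e., that $c(R,t)\ge m_0$ for every $t<T_{\rm max}$, and to do so by a contradiction argument that exploits the monotonicity of $c(\cdot,t)$ together with the Robin boundary condition and the upper bound on $Q$ from Lemma~\ref{lem2}.

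First, I would record two structural observations that do all the work. From \eqref{diffQC} we know $c_r\ge0$, so $c(\cdot,t)$ is non-decreasing in $r$; in particular $c(\rho,t)\le c(R,t)$ for all $\rho\in[0,R]$. Moreover, integrating the $c$-equation $(r^{d-1}c_r)_r=r^{d-1}nc$ from $0$ to $R$ yields the explicit identity
\[
c_r(R,t)=R^{1-d}\int_0^R\rho^{d-1}n(\rho,t)c(\rho,t)\,d\rho,
\]
and the Robin boundary condition in \eqref{MODEL0BC} in the radial setting reads $c_r(R,t)=\gamma-c(R,t)$.

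Now suppose, for contradiction, that $c(R,t)<m_0$ for some $t<T_{\rm max}$. By monotonicity, $c(\rho,t)<m_0$ for all $\rho\in[0,R]$. Then, using the identity above together with the definition of $Q$ in \eqref{DEFQ} and Lemma~\ref{lem2},
\[
c_r(R,t)\le m_0 R^{1-d}\int_0^R \rho^{d-1}n(\rho,t)\,d\rho=\frac{m_0}{\sigma_d}R^{1-d}Q(R,t)\le \frac{m_0 M_0 R}{\sigma_d}.
\]
Combining this with the Robin identity $c_r(R,t)=\gamma-c(R,t)>\gamma-m_0$ gives
\[
\gamma-m_0<\frac{m_0 M_0 R}{\sigma_d},\qquad\text{i.e.,}\qquad \gamma<m_0\Bigl(\frac{M_0 R}{\sigma_d}+1\Bigr).
\]
But the very choice of $m_0$ in \eqref{LEM3_1} is precisely $m_0(M_0R/\sigma_d+1)=\gamma/2$, which would force $\gamma<\gamma/2$, a contradiction. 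Hence $c(R,t)\ge m_0$ and the choice $r_t=R$ works.

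There is no serious obstacle here; the only thing to be careful about is keeping the strict and non-strict inequalities consistent (the strict inequality comes from the Robin identity $\gamma-c(R,t)>\gamma-m_0$ under the contradiction hypothesis $c(R,t)<m_0$, while the bound on $c_r(R,t)$ is non-strict, which is enough to produce the contradiction against the sharp identity $m_0(M_0R/\sigma_d+1)=\gamma/2$).
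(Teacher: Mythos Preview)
Your proof is correct and follows essentially the same contradiction argument as the paper: assume $c<m_0$ everywhere, bound $c_r(R,t)$ via Lemma~\ref{lem2}, and contrast this with the Robin identity $c_r(R,t)=\gamma-c(R,t)$ to force $\gamma<\gamma/2$. The only (minor) addition is that you invoke the monotonicity $c_r\ge0$ from \eqref{diffQC} at the outset to conclude that one may always take $r_t=R$, whereas the paper simply negates the existential statement directly.
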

\begin{proof}
Suppose that \eqref{LEM3_1} is false. Then, there exists $T<T_{\rm max}$  such that
\[
c(r,T)<  m_{0} \quad\mbox{for all}\quad r\in[0,R].
\]
Fix $t=T$. Using  the $c$ equation and Lemma~\ref{lem2}, we can estimate   
\[
c_{r}=r^{1-d}\int_{0}^{r}\rho^{d-1}nc\,d\rho < \frac{M_{0}m_{0}}{\sigma_{d}}r \quad\mbox{for all}\quad r\in(0,R].
\] 
If we take $r=R$, then from the boundary condition and $c< m_{0}$, we have
\[
\gamma-m_{0} < \gamma-c(R)=c_{r}(R)<  \frac{M_{0}m_{0}}{\sigma_{d}}R.
\]
This leads to a contradiction because  $m_{0}<\gamma\bke{ \frac{M_{0}R}{\sigma_{d}}+1 }^{-1}$.
\end{proof}
As a consequence,   $c$ has the lower bound which is   uniform in space and time:
\begin{lemma}\label{lem4}
Let $(n,c)$ be the solution given by Lemma~\ref{LEM1}, and let  $M_{0}$ and $m_{0}$ be  numbers given in Lemma~\ref{lem2} and  Lemma~\ref{lem3}, respectively.
Then, it holds that
\[
\min_{r\in[0,R]} c(r,t)\ge   c_{*}:= m_{0}\exp\bke{-\frac{1}{2}\frac{M_{0}R^{2}}{\sigma_{d}}} \quad\mbox{for}\quad  t < T_{\rm max}.
\]
\end{lemma}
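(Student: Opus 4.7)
\textbf{Proof proposal for Lemma~\ref{lem4}.} The plan is to exploit two features already established: the radial monotonicity $c_r \ge 0$ from \eqref{diffQC}, and the existence of a ``reference radius'' $r_t$ with $c(r_t,t)\ge m_0$ from Lemma~\ref{lem3}. The first fact tells us that the spatial minimum of $c(\cdot,t)$ on $[0,R]$ is attained at $r=0$, so the task reduces to bounding $c(0,t)$ from below. The second fact gives us a value at which $c$ is already controlled; the issue is therefore to bound the possible \emph{decrease} of $c$ as $r$ moves from $r_t$ down to $0$.

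The key idea is to pass from an additive to a multiplicative (Gr\"onwall type) estimate by dividing the identity for $c_r$ by $c$ itself. Starting from \eqref{diffQC},
\[
c_r(r,t) \;=\; r^{1-d}\int_{0}^{r}\rho^{d-1} n(\rho,t)\, c(\rho,t)\,d\rho,
\]
I would use the monotonicity $c(\rho,t)\le c(r,t)$ for $\rho\le r$ to pull $c(r,t)$ out of the integral and then invoke Lemma~\ref{lem2} to control the remaining mass integral by $\sigma_d^{-1}Q(r,t)\le (M_0/\sigma_d)\, r^{d}$. This yields the pointwise inequality
\[
\frac{c_r(r,t)}{c(r,t)} \;\le\; \frac{M_0}{\sigma_d}\, r, \qquad r\in(0,R],\ t<T_{\rm max},
\]
which is exactly a differential inequality for $\log c$ in the radial variable.

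Integrating $(\log c)_r \le (M_0/\sigma_d) r$ from $0$ to $r_t$ gives
\[
\log c(r_t,t) - \log c(0,t) \;\le\; \frac{M_0 r_t^2}{2\sigma_d} \;\le\; \frac{M_0 R^2}{2\sigma_d},
\]
and combining this with $c(r_t,t)\ge m_0$ from Lemma~\ref{lem3} yields
\[
c(0,t)\;\ge\; m_0\,\exp\!\left(-\frac{M_0 R^2}{2\sigma_d}\right)\;=\;c_*.
\]
Since $c_r\ge 0$ implies $\min_{r\in[0,R]} c(r,t)=c(0,t)$, this is the desired bound. The case $r_t=0$ is immediate, and everything holds uniformly in $t<T_{\rm max}$ because $M_0$ and $m_0$ are time-independent.

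I do not anticipate a serious obstacle: the main trick — turning the integral identity for $c_r$ into a Gr\"onwall-type bound for $\log c$ via the monotonicity of $c$ in $r$ — is quite direct once Lemma~\ref{lem2} and Lemma~\ref{lem3} are in hand. The only subtle point to verify is that the pointwise estimate extends down to $r=0$, which follows from the continuity of $c(\cdot,t)\in\mathcal{C}^{1}(\overline{\Omega})$ provided by Lemma~\ref{LEM1} together with the radial symmetry (so that $c_r(0,t)=0$), ensuring that the integration from $0$ to $r_t$ is legitimate.
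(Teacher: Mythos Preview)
Your proposal is correct and follows essentially the same route as the paper: reduce to bounding $c(0,t)$ via $c_r\ge 0$, establish the differential inequality $(\log c)_r \le (M_0/\sigma_d)\,r$, integrate from $0$ to $r_t$, and invoke Lemma~\ref{lem3}. The only cosmetic difference is in how the inequality for $(\log c)_r$ is obtained: the paper writes $\Delta \log c = n - |\nabla \log c|^2 \le n$ and integrates the radial form, whereas you use the monotonicity $c(\rho,t)\le c(r,t)$ to factor $c(r,t)$ out of the integral representation of $c_r$; both yield the identical bound and the remainder of the argument coincides.
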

 \begin{proof}
Note that by Lemma~\ref{LEM1},  
\[
c>0,
\] and  by  $c_{r}\ge0$ in  \eqref{diffQC},
\[
\min_{r\in[0,R]}c(r,t)=c(0,t).
\]  Since for each   $t<T_{\rm max}$, there exists $r_{t}\in[0,R]$ satisfying \eqref{LEM3_1},
in view of  Lemma~\ref{lem2} and
 \[
 r^{1-d} \bke{r^{d-1} (\log c)_{r}}_{r}= \Delta \log c =n-\abs{\nabla \log c}^{2}\le n, 
 \]
we have that
 \[
  (\log c)_{r}=  r^{1-d}  \int_{0}^{r}\bke{\rho^{d-1} (\log c)_{\rho}}_{\rho}\,d\rho \le  r^{1-d}\int_{0}^{r}\rho^{d-1}n \,d\rho\le \frac{M_{0}}{\sigma_{d}}r.
 \] 
If we integrate it from $0$ to $r_{t}$, then   
\[
\log \frac{c(r_{t},t)}{c(0,t)} \le \int_{0}^{r_{t}}\frac{M_{0}}{\sigma_{d}}\rho \,d\rho=\frac{M_{0}}{2\sigma_{d}}r_{t}^{2}.
\]
Therefore, using  \eqref{LEM3_1} and $r_{t}\le R$, we can deduce the desired result.
\end{proof}
We are ready to prove Theorem~\ref{THM2}.  
\begin{pfthm2}
Let $(n,c)$ be the solution given by Lemma~\ref{LEM1}. 
From \eqref{diffQC} and $Q(r,t)\le M_{0}r^{d}$ in Lemma~\ref{lem2}, we have 
\[
0\le c_{r}=r^{1-d}\int_{0}^{r}\rho^{d-1}nc\,d\rho \le \frac{\gamma M_{0} }{\sigma_{d}}r\quad\mbox{for}\quad r\in(0,R].
\] 
Thus, $\nabla c$ has uniform-in-time pointwise bounds.
Moreover,  by  $c_{*}\le c\le\gamma$   and $\chi(x,n,c)\le \chi_{0}(c)\in\mathcal{C}(\R_{+})$, we have that  $\chi(x,n,c) $ is bounded uniformly in time.  
Then, the standard parabolic regularity theory gives a uniform-in-time bound for $n$. This concludes Theorem~\ref{THM2} from Lemma~\ref{LEM1}.  
\end{pfthm2}  

\section*{Acknowledgment}
The authors express sincere gratitude to the anonymous referees for their helpful remarks and their careful reading of our manuscript. J. Ahn was supported by National Research Foundation (NRF) of Korea (Grant No. NRF-2021R1F1A1064209). K. Kang was supported by NRF-2019R1A2C1084685. 
 J. Lee was supported by Samsung Science and Technology Foundation under Project No. SSTF-BA1701-05.


\begin{thebibliography}{00}

 \bibitem{B17}  M. Braukhoff, Global (weak) solution of the chemotaxis-Navier-Stokes equations with non-homogeneous boundary conditions and logistic growth, Ann. Inst. Henri Poincar\'e, Anal. Non Lin\'eaire {\bf 34} (2017) 101--1039.

 \bibitem{BL19} M. Braukhoff, J. Lankeit, Stationary solutions to a chemotaxis-consumption model with realistic boundary conditions for the oxygen, Math. Models Methods Appl. Sci. {\bf 29} (2019) 2033--2062.

\bibitem{BT20} M. Braukhoff, B. Q. Tang, Global solutions for chemotaxis-Navier-Stokes system with Robin boundary conditions, J. Differential Equations {\bf 269} (2020) 10630--10669.
 
 \bibitem{BGT19} D. Bucur, A. Giacomini, P. Trebeschi, Best constant in Poincar\'e inequalities with traces: a free discontinuity approach, Ann. Inst. H. Poincar\'e Anal. Non Lin\'eaire {\bf 36} (2019) 1959--1986.
 
 \bibitem{C14}  X. Cao, S. Ishida, Global-in-time bounded weak solutions to a degenerate quasilinear Keller--Segel system with rotation, Nonlinearity {\bf 27} (2014) 1899--913.
 
 
\bibitem{CL16} X. Cao, J. Lankeit, Global classical small-data solutions for a three-dimensional chemotaxis Navier--Stokes system involving matrix-valued sensitivities, Calc. Var. Partial Differ. Equ. {\bf 55} (2016) 39 pp. 

 
 \bibitem{CKL13} M. Chae, K. Kang, J. Lee, Existence of smooth solutions to coupled chemotaxis-fluid equations, Discrete Contin. Dyn. Syst. {\bf 33} (2013) 2271--2297. 

\bibitem{DTM05} W. R. DiLuzio, L. Turner, M. Mayer, P. Garstecki, D. B. Weibel, H. C. Berg, G. M. Whitesides, Escherichia coli swim on the right-hand side, Nature {\bf 435} (2005) 1271--1274.

\bibitem{DLM10} R. Duan, A. Lorz, P. Markowich, Global solutions to the coupled chemotaxis-fluid equations, Comm. Part. Differ. Eq. {\bf 35} (2010) 1635--1673.
 
\bibitem{FLM21} M. Fuest, J. Lankeit, M. Mizukami, Long-term behaviour in a parabolic-elliptic chemotaxis-consumption model. Journal of Differential Equations {\bf 271} (2021) 254--279.

\bibitem{FS16} K. Fujie, T. Senba. Global existence and boundedness of radial solutions to a two dimensional fully parabolic chemotaxis system with general sensitivity. Nonlinearity {\bf 29} (2016) 2417--2450.

\bibitem{G85}  P. Grisvard, Elliptic Problems in Nonsmooth Domains, Pitman, London (1985).

\bibitem{LSU88} O. Lady\v{z}henskaya, V. Solonnikov, N. Ural'ceva, Linear and Quasilinear Equations of Parabolic Type, Transl. Math.
Monogr., vol. 23, AMS, Providence, RI, 1988.

\bibitem{LDW06} E. Lauga, W. R. DiLuzio, G. M. Whitesides, H. A. Stone, Swimming in circles: motion of bacteria near solid boundaries, Biophys. J. {\bf 90} (2006) 400--412.

\bibitem{LS15} T. Li, A. Suen, C. Xue, M. Winkler, Global small-data solutions of a two-dimensional chemotaxis system with rotational flux terms, Math. Models. Methods. Appl. Sci. {\bf 25} (2015) 721--46.

\bibitem{L13} G. M. Lieberman, Oblique  Derivative Problems for Elliptic Equations, World Scientific Publishing Co. Pte. Ltd., Hackensack, NJ, 2013.

\bibitem{L96} G. M. Lieberman, Second order parabolic differential equations, World Scientific Publishing Co. Inc., River Edge, NJ, 1996.

\bibitem{OH02}  H. G. Othmer, T. Hillen, The diffusion limit of transport equations ii: chemotaxis equations, SIAM J. Appl. Math. {\bf 62} (2002) 1222--1250

\bibitem{PV93} M. M. Porzio, V. Vespri, H\"older estimates for local solutions of some doubly nonlinear degenerate parabolic equations, J. Differential Equations {\bf 103} (1993) 146--178.

\bibitem{TX20} Y. Tian, Z. Xiang, Global solutions to a 3D chemotaxis-Stokes system with nonlinear cell diffusion and Robin signal boundary condition,   J. Differential Equations {\bf 269} (2020) 2012--178.

\bibitem{TC05} I. Tuval, L. Cisneros, C. Dombrowski, C. W. Wolgemuth, J. O. Kessler, R. E. Goldstein, Bacterial swimming and oxygen transport near contact lines, Proc. Nat. Acad. Sci. USA {\bf 102} (2005) 2277--2282.

\bibitem{WC15} Y. Wang, X. Cao, Global classical solutions of a 3D chemotaxis-Stokes system with rotation, Discrete Contin. Dyn. Syst. Ser. B {\bf 20} (2015) 3235--54.

\bibitem{WL17} Y. Wang, X. Li, Boundedness for a 3D chemotaxis--Stokes system with porous medium diffusion and tensor-valued chemotactic sensitivity, Z. Angew. Math. Phys. {\bf 68} (2017) 23 pp.

\bibitem{W12} M. Winkler, Global large-data solutions in a chemotaxis-(Navier-)Stokes system modeling cellular swimming in fluid drops, Comm. Part. Differ. Eq. {\bf 37} (2012) 319--351.

 \bibitem{W14} M. Winkler, Stabilization in a two-dimensional chemotaxis-Navier-Stokes system. Arch. Ration. Mech.
Anal. {\bf 211} (2014) 455--487. 

\bibitem{WW15} M. Winkler, Boundedness and large time behavior in a three-dimensional chemotaxis--Stokes system with nonlinear diffusion and general sensitivity, Calc. Var. Partial Differ. Equ. {\bf 54} (2015) 3789--828.
  
\bibitem{W15} M. Winkler, Large-data global generalized solutions in a chemotaxis system with tensor-valued sensitivities, SIAM J. Math. Anal. {\bf 47} (2015) 3092--115

\bibitem{W17} M. Winkler, How far do chemotaxis-driven forces influence regularity in the Navier--Stokes system? Trans. Amer. Math. Soc. {\bf 369}  (2017) 3067--3125.

\bibitem{W18} M. Winkler, Global mass-preserving solutions in a two-dimensional chemotaxis--Navier--Stokes system with rotational flux components, J. Evol. Equ. {\bf 18} (2018) 1267--89.

\bibitem{W21} M. Winkler, Can rotational fluxes impede the tendency toward spatial homogeneity in nutrient taxis(-Stokes) systems?, IMRN (2021) 8106--8152.

\bibitem{WX20} C. Wu, Z. Xiang, Asymptotic dynamics on a chemotaxis-Navier-Stokes system with nonlinear diffusion and inhomogeneous boundary conditions, Math. Models. Methods. Appl. Sci. {\bf 30} (2020) 1325--1374.

\bibitem{X15} C. Xue, Macroscopic equations for bacterial chemotaxis: integration of detailed biochemistry of cell signaling, J. Math. Biol. {\bf 70} (2015) 1--44.

\bibitem{XBO11} C. Xue, E. O. Budrene, H. G. Othmer, Radial and spiral streamformation in proteus mirabilis colonies, PLoS Comput. Biol. {\bf 7} e1002332 (2011) 
  

\bibitem{XO09} C. Xue, H. G. Othmer, Multiscale models of taxis-driven patterning in bacterial populations, SIAM J. Appl. Math. {\bf 70} (2009) 133--167.






  





\end{thebibliography}
\end{document}